\providecommand{\Div}{\operatorname{div}}          % Divergence
\providecommand{\grad}{\operatorname{{\bf grad}}}       % standard gradient
\providecommand{\Supp}{\operatorname{supp}}                            % support
\providecommand{\supp}{\Supp}
\providecommand{\Id}{\Op{Id}}                     % Identity operator
\newcommand{\VN}{{\mathbf{N}}}
\newcommand{\VR}{{\mathbf{R}}}
\newcommand{\VS}{{\mathbf{S}}}
\newcommand{\VV}{{\mathbf{V}}}
\newcommand{\Dsf}{\mathsf{D}}
\newcommand{\Ksf}{\mathsf{K}}
\newcommand{\fsf}{\mathsf{f}}
\newcommand{\ksf}{\mathsf{k}}
\providecommand{\Cf}{{\cal F}}
\providecommand{\Cg}{{\cal G}}
\providecommand{\Fg}{\mathfrak{g}}
\providecommand{\Fl}{\mathfrak{l}}
\providecommand{\Fv}{\mathfrak{v}}
\providecommand{\Fw}{\mathfrak{w}}
\newcommand*{\Op}[1]{\mathsf{#1}} % Operators
 \providecommand{\keywords}[1]{\textbf{\textit{Keywords: }} #1}
  \numberwithin{equation}{section}
  \tikzset{point/.style={insert path={ node[scale=2.5*sqrt(\pgflinewidth)]{.} }}}
  \def\centerarc[#1](#2)(#3:#4:#5){ \draw[#1] ($(#2)+({#5*cos(#3)},{#5*sin(#3)})$) arc (#3:#4:#5); }
 \newcommand{\ac}{\accentset{\circ}}
 \newcommand{\Hrep}{\mathcal H(\mathcal X,\VR^d)}
\newcommand{\VXM}{\mathcal V^{M}_{\mathcal X}(\VR^d,\VR^d)}
\newcommand{\NXM}{\mathcal N^{M}_{\mathcal X}(M,\VR^d)}
\newcommand{\VMM}{\mathcal V^{M}(\VR^d,\VR^d)}
\newcommand{\VOmega}{\mathcal V^{\partial\Omega}(\VR^d,\VR^d)}
  \newtheorem{thm}{Theorem}[section]
  \theoremstyle{plain}
  \newtheorem{lemma}[thm]{{\textbf Lemma}}
  \newtheorem{theorem}[thm]{{\textbf Theorem}}
  \newtheorem{corollary}[thm]{{\textbf Corollary}}
  \newtheorem{assumption}[thm]{{\textbf Assumption}}
  \newtheorem{definition}[thm]{{\textbf Definition}}
  \newtheorem{remark}[thm]{{\textbf Remark}}
  \newtheorem{example}[thm]{{\textbf Example}}
  \newcommand{\overbar}[1]{\mkern 1.5mu\overline{\mkern-1.5mu#1\mkern-1.5mu}\mkern 1.5mu}
  \newcommand{\eps}{\epsilon}
 \newcommand{\llb}{\llbracket}
 \newcommand{\rrb}{\rrbracket}
\newcommand{\ben}{\begin{equation}}
\newcommand{\een}{\end{equation}}
\newcommand{\beq}{\begin{eqnarray}}
\newcommand{\eeq}{\end{eqnarray}}
\newcommand{\benn}{\begin{equation*}}
\newcommand{\eenn}{\end{equation*}}
  \title{Convergence of Newton's method in shape optimisation via approximate normal functions}
  \author{Kevin Sturm\thanks{Universit\"at Duisburg-Essen, Fakult\"at f\"ur Mathematik, Thea-Leymann-Str. 9, D-45127 Essen, Germany (kevin.sturm@uni-due.de)} }
  \date{}
\begin{document}
  	%\maketitle
  	
  	\maketitle

% REQUIRED
\begin{abstract}
In this paper we propose a Newton 
method for shape functions defined on an image set generated by the (Micheletti) metric group. We review basic 
properties of the metric group and a quotient associated with the metric group and a fixed domain. 

Taking into account the special structure of the second shape derivative and its 
symmetric part allows us to distinguish between two Hessians, the \textit{domain shape Hessian} and the \textit{boundary shape Hessian}.

Using the domain Hessian we define a Newton method on the metric group by discretising the tangent space of the quotient via approximate normal functions using reproducing kernels. Under suitable 
assumptions we are able to show superlinear convergences of the Newton iterations 
and additionally convergence of the shapes in the 
metric group. Finally we verify our findings in a number of numerical experiments including a thorough numerical study of the impact of the discretisation on the convergence speed.  
\end{abstract}
% REQUIRED
	\keywords{
  		shape optimization, Micheletti group, Newton methods,   
  		convergence analysis,  numerical mathematics
  	}
  	
% REQUIRED
%\begin{AMS}
% 49Q10, 49M15, 93B40
%\end{AMS}

\section*{Introduction}
Shape optimisation is concerned with the minimisation of real-valued shape functions $J(\Omega)$ over an admissible set $\mathcal A$ containing a collection of subsets $\Omega\subset \VR^d$; see \cite{HenPie05,SokZol89,DelZol11,MR1969772}.
Many tasks and processes in industry can be optimised using shape optimisation methods. Therefore it is
of paramount importance to find efficient methods to solve these problems numerically. 

The aim of this paper is to develop a Newton 
algorithm to find stationary points of shape functions defined on an image set generated by the metric group $\Cf := \Cf(C^1)$; cf \cite[Chapter 
3]{DelZol11} and \cite{MR0377306,ubt_epub202}. For every fixed set $\omega\subset \VR^d$ the image set $\mathcal A_\omega :=\mathcal Z(\omega)$ consists of all images $F(\omega)$, where $F:\VR^d\rightarrow \VR^d$ 
belongs to the metric group $\Cf$. This image 
set can be identified with the quotient $\Cf/\Cg_\omega$ that identifies transformations in $\Cf$ with the same image on $\omega$. Using the special 
structure of the second Euler derivative and its symmetric part allows us to define two shape Hessians, the \textit{domain (shape) Hessian} 
and the \textit{boundary (shape) Hessian}. The domain and boundary Hessian are functions defined on the tangent space of the metric group and its quotient space, respectively, and coincide when they are restricted to normal perturbations on the boundary. We also establish a new 
proof of the structure theorem for the symmetric part of the second Euler derivative;  \cite{MR1930612,MR1625559}.

In order to approximate the Newton equation we need to approximate the tangent space or a subspace of  $\Cf/\Cg_\omega$  in a suitable way. For this purpose we introduce for every $C^1$ submanifold $M$ of co-dimension one (having in mind $M=\partial \Omega$) so called \textit{approximate normal functions}.  The properties of the reproducing kernel ensure that these functions are linearly independent. Additionally approximate normal functions are  approximately normal along $M$ and thus are suitable functions to approximate
a subset of the tangent space of the quotient space $\Cf/\Cg_\omega$. In order to have a sparse Hessian approximation we work with compactly supported reproducing kernels. A key ingredient of the proof is a transport that relates approximate normal functions on different domains.   This allows us to show superlinear convergence of Newton's method in the discrete setting. 
Our analysis reveals that quadratic convergence cannot be expected when normal fields
are approximated.

Second order methods such as Newton and Newton-like methods have the great advantage over gradient methods that they converge superlinearly or 
even quadratically. 
Despite their importance, the literature on second order methods for shape optimisation problems
is incomplete and only a limited number of papers use second order information; see \cite{MR1777697,MR2211069,MR2049659,MR2049779,MR2192593,MR2144354,Allaire2016,MR1759037,MR3518366}. Convergence 
analysis of second order methods is even less studied; \cite{MR3201954,MR2049779,MR2299620}.

One reason for the lack of literature in this field is the notorious nonlinearity of the space of admissible shapes which leads to nonconvex optimisation problems. However in some situations it is possible to turn admissible sets into a (mostly Riemannian) manifold and therefore tools from differential geometry become accessible.
Newton methods, Newton-like and gradient methods on finite dimensional Riemannian manifolds were already subject of intensive research
\cite{MR2364186,MR2968868}. In shape optimisation the spaces of shapes are at best infinite dimensional manifolds and in this situation the analysis is more 
complicated as one has to account for the infinite dimensionality of 
the manifold;\;\cite{MR1471480,MR583436}.   In the recent work \cite{MR3201954} the 
link between shape optimisation problems and a certain infinite 
dimensional Riemannian manifolds of mappings, also called shape space, has been established.
To be more specific the analysis 
was carried out in the so-called shape space of plane curves studied in \cite{MR2201275}. 
In this paper we want to provide another approach employing the Micheletti metric space. 

\paragraph*{Structure of the paper}
\mbox{}\\
In Section 1 we recall the definition of the 
 metric group $\Cf$ and its basic properties. 

In Section 2, we recall the structure of first and second shape derivatives.  
We give a new proof of the structure of the symmetric part of the second 
derivative (referred to as third structure theorem). 
Then  we introduce two shape Hessians, the domain shape Hessian and boundary shape Hessian defined on the tangent space of $\Cf$ and  $\Cf/\Cg_\omega$, respectively. 

In Section 3, we use reproducing kernels to introduce novel approximate normal basis functions. These functions yield an approximation of subspace of the 
tangent space of the quotient $\Cf/\Cg_\omega$. It turns out that the domain and boundary shape Hessian restricted to 
the space of approximate normal functions are approximately the same. As a result as long as we are close
to a stationary point we can use the domain Hessian instead of the boundary Hessian.

In Section 4, we introduce and study a Newton method using the approximate normal functions from Section 3. 
A careful analysis shows that, under suitable conditions, the Newton method converges superlinear. The 
generated transformations which correspond to the shapes convergence in the metric
of $\Cf$.

Section 6 provides some numerical results comparing a gradient method with Newton's methods defined by different Hessians. We show experiments employing the domain, boundary and Riemannian shape Hessian \cite{MR3201954}. These 
results are compared with a standard gradient algorithm and show the superiority of Newton's method near 
a stationary point.

\section{Micheletti's metric group and its properties}
 This section builds the basis upon which we will develop our Newton method and its convergence proof. Particularly we introduce function spaces, define the Micheletti metric group, and recall some of its properties. 
\subsection{Function spaces}
Throughout this paper $\Dsf \subset \VR^d$ is an open set. 
We denote the space of continuous vector fields on $\overbar\Dsf$ vanishing on $\partial \Dsf$ by 
\[
\ac C(\overbar \Dsf,\VR^d) = \{f:\overbar\Dsf \rightarrow \VR^d: \; f \text{ is continuous and } f=0\text{ on } \partial \Dsf\}.
\]
We denote by  $C^k(\Dsf,\VR^d)$, $k\ge 1$, the usual space of $k$-times continuously 
differentiable functions on $\Dsf$ with values in $\VR^d$.
The space $C^k(\overbar\Dsf,\VR^d)$ comprises all functions from $C^k(\Dsf,\VR^d)$ that admit a uniformly continuous and bounded extensions of its partial derivatives $\partial_\alpha f$ to $\overbar{\Dsf}$ for all  multi-indices $\alpha=(\alpha_1,\ldots, \alpha_d)\in \VN^d$  satisfying  $|\alpha|\le k$. 
The space $C^k_b(\Dsf ,\VR^d)$ indicates all $k$-times differentiable 
functions $f$ on $ \Dsf$ with values in $\VR^d$ that have bounded and continuous partial derivatives $\partial_\alpha f$ 
for all multi-indices $|\alpha|\le k$.  We equip the spaces $C^1(\overbar \Dsf,\VR^d)$ and $C^1_b( \Dsf,\VR^d)$ with the norm
$
\|f\|_{C^1} := \sup_{x\in\overbar \Dsf} \|f(x)\| + \sup_{x\in\overbar \Dsf}\|\partial f(x)\|, 
$
where $\partial f$ denotes the first derivative of $f$.  

For all spaces introduced above we define subspaces:
 $\ac C^k(\overbar{\Dsf},\VR^d) :=C^k(\overbar{\Dsf},\VR^d)\cap \ac C(\overbar\Dsf,\VR^d)$ and 
$\ac C^k_b(\Dsf,\VR^d) :=C^k_b(\Dsf,\VR^d)\cap \ac C(\overbar\Dsf,\VR^d)$.   It is worth nothing that $C^k(\overbar \VR^d,\VR^d)\ne C^k(\VR^d,\VR^d)$.

 The flow $\Phi_t^X=\Phi_t$ of a vector field $X\in \ac C^1(\overbar \Dsf,\VR^d)$ is defined by $\Phi_t^X(x_0):=x(t,X)$  for $x_0\in \overbar\Dsf$ and $t\ge 0$, where $x(\cdot,X) = x(\cdot)$ is the solution of $x'(t) = X(x(t))$, $t\ge 0$ and $x(0)=x_0$; see \cite[pp. 131]{MR2413709}. 
%%%%%%%%%%%%%%%%%%%%%%%%%%%%%%%%%%%%%%%%%%%%%%%%%%%%%%%%%%%%%%%%%%%
%%%%%%%%%%%%%%%%%%%%%%%%%%%%%%%%%%%%%%%%%%%%%%%%%%%%%%%%%%%%%%%%%%%

\subsection{Group of transformations and metric}
We begin with the definition of the metric group $\Cf$ and review 
some of its basic properties; see \cite[Chapter 3]{DelZol11}. 
\begin{definition}[{\cite[p.124]{DelZol11}}]
	The Micheletti group associated with the Banach space $C^1(\overbar\VR^d,\VR^d)$ is defined by 
	\ben
	\mathcal{F}  := \{\Id+f:\VR^d\rightarrow\VR^d \text{ bijective}: \; f\in C^1(\overbar\VR^d,\VR^d), \; \exists g\in 
	C^1(\overbar\VR^d,\VR^d) \text{ so that } (\Id + f)^{-1} = \Id + g\}.
	\een
\end{definition}
This set is a group under composition $(F_1\circ F_2)(x):= F_1(F_2(x))$. The transformations $F=\Id+f$ in $\Cf$ are unbounded, since the identity mapping $\Id$ on $\VR^d$ is unbounded and $f$ is bounded. However, their derivative $\partial F = I+\partial f$ is bounded since the identity matrix $I\in \VR^{d,d}$ and  $\partial f$ are both bounded on $\VR^d$. 
\begin{definition}[{\cite[p.126]{DelZol11}}]
	The distance between the identity mapping $\Id$ on $\VR^d$ and $F\in \Cf$ is defined by 
	\ben\label{eq:micheletti}
	d(\text{id},F) := \inf_{\substack{ F=(\Id+f_1)\circ\cdots (\Id+f_n),\\ 
			n\in \VN,\; \Id+f_i\in \Cf } } \sum_{k=1}^n 
	\|f_k\|_{C^1} +  \|f_k\circ (\Id+f_k)^{-1}\|_{C^1}.
	\een
	The distance between arbitrary $F_1,F_2\in \Cf$ is 
	defined by 
	$d(F_1,F_2):= d(\Id, F_2\circ F_1^{-1}).$
\end{definition}
It is readily checked that $d(\cdot, \cdot)$ is right-invariant, that is, 
$d(F_1\circ G, F_2\circ G)=d(F_1,F_2)$ for all $F_1,F_2,G\in \Cf$.  The symmetry follows from the right-invariance and the definition of the metric. 
For a proof that $d$ satisfies the triangle inequality and the completeness of  $(\Cf
, d(\cdot,\cdot))$ we refer to \cite[p.134, Theorem 2.6]{DelZol11}.

\subsection{Image sets and subgroup}
In shape optimisation the metric space $(\Cf,d)$ is used as follows.  We 
take an arbitrary set $\omega\subset \VR^d$ and associate with it the image set 
\ben\label{eq:image_set_X}
 \mathcal Z(\omega)  := \{(\Id+f)(\omega):\; \Id+f\in \Cf \}.
\een
This set forms the set of all admissible shapes on which a shape function $J(\cdot)$ is to be minimised. In the 
following sections we study a Newton method that aims to find stationary points of a shape function $J:\mathcal Z(\omega)\rightarrow \VR$.

A set $\Omega \in \mathcal Z(\omega)$ does not correspond to a unique $F\in \Cf$ as two elements $F,\tilde F\in \Cf$ can have the same image $F(\omega)=\tilde F(\omega)$.
Therefore we identify transformations whose image coincides on $\omega$. For this purpose we define a subgroup of $\Cf$ by  
\ben\label{eq:subgroup_G}
\Cg_\omega := \{F\in \Cf:\; F(\omega)=\omega \}.
\een
It is readily checked that $\Cg_\omega$ is a subgroup of $\Cf$ and hence the quotient  $\Cf/\Cg_\omega$ is well-defined. It can also be shown that $\Cf/\Cg_\omega$ equipped with the quotient metric is a complete metric space (\cite[Theorem 2.8, p. 141]{DelZol11}) itself if for example 
$\Omega$ is a smooth domain or open and crack free ($\text{int}(\overbar{\Omega})=\Omega$); see \cite[Chapter 3]{DelZol11}. Henceforth we denote 
the equivalence classes of $\Cf/\Cg_\omega$ by $[F]$. 
\begin{definition}\label{def:identification}
The set  $\mathcal Z(\omega)$  and the quotient $\Cf/\Cg_\omega$ are identified via the bijection
$ j_{\omega}: \Cf/\Cg_\omega \rightarrow  \mathcal Z(\omega) $ that maps the equivalence class $[F]$ to its images $F(\omega)$. Every function $f:\Cf/\Cg_\omega\rightarrow \VR$ is identified with $\tilde f: \mathcal Z(\omega)  \rightarrow \VR$ via $\tilde f := f\circ j^{-1}_\omega.$
\end{definition}

\subsection{Properties of the metric}
Let us now extract some refined properties of the metric $d(\cdot,\cdot)$. These properties are used later for the proof of our Newton method. 
 We show that if the norm of $f\in C^1(\overbar\VR^d,\VR^d)$ is smaller than one, then 
the distance $d(\Id,\Id+f)$ can be estimated from above. 
\begin{lemma}\label{lem:estimate_metric}
	Let $q\in (0,1)$ be arbitrary.	For all $\Id+f\in \Cf$ such that $\|f\|_{C^1}<q$, we have
	\ben
	d(\Id,\Id+f) \le \|f\|_{C^1} + \|f\|_\infty +  1/(1-q)  \|\partial f\|_\infty (\|\partial f\|_\infty + 1). 
	\een
	Particularly $d(\Id,\Id+f) \le p_2(\|f\|_{C^1})$ with $p_2(r) := (2+1/(1-q))r + (1/(1-q))r^2$.
\end{lemma}
\begin{proof}
	Firstly by definition of 
	$d(\cdot,\cdot)$ as an infimum,
	$d(\Id,\Id + f) \le \|f\|_{C^1} + \|f\circ (\Id+f)^{-1}\|_{C^1}$
	for all $\Id+f\in \Cf $. As $\Id+f$ is a bijection, we have $\|f\circ (\Id+f)^{-1}\|_\infty = \|f\|_\infty$. 
	By the chain rule we obtain $\partial (f\circ (\Id+f)^{-1}) = (\partial f(I+\partial f)^{-1})\circ (\Id+f)^{-1}$ and thus using again that $\Id+f$ is a bijection gives
	\ben\label{eq:estimate_partial_f}
	\begin{split}
		\|\partial (f\circ (\Id+f)^{-1})\|_\infty \le \|\partial f\|_\infty\| 
		(I+\partial f)^{-1}\|_\infty.
	\end{split}
	\een
	Let $\text{inv}(A):=A^{-1}$ denote the inverse mapping defined for all 
	invertible $A\in \VR^{d,d}$.  
	For given  invertible  $A_0\in \VR^{d,d}$ and $A\in 
	\VR^{d,d}$ with $\|A-A_0\|<q/\|A_0^{-1}\|$, we get by \cite[Satz 7.2]{AmmEschII}
	the Lipschitz estimate $ \|\text{inv}(A)-\text{inv}(A_0)\| < 1/(1-q)\|A_0^{-1}\|^2\|A-A_0\| $. 
	It follows by the triangle inequality
	$
	\|\text{inv}(A)\| < 1/(1-q)\|A_0^{-1}\|^2\|A-A_0\| + \|\text{inv}(A_0)\|. 
	$
	Hence setting $A_0:=I$ and $A:=I+\partial f(x)$ for fixed $x\in \VR^d$ yields
	$
	\|(I+\partial f)^{-1}(x)\| < 1/(1-q)\|\partial f(x)\| + 1. 
	$ 
	Thus using this estimate in \eqref{eq:estimate_partial_f} we arrive at
	$
	\|\partial (f\circ (\Id+f)^{-1})\|_\infty \le   1/(1-q) \|\partial f\|_\infty (\|\partial f\|_\infty + 1)$
	and this finishes the proof. 
\end{proof}

The next lemma shows a statement similar to Lemma~\ref{lem:estimate_metric}, but without 
the assumption that the norms of $f_i$ being smaller than one.  However, the estimate
is not as sharp. We also refer to \cite[p. 127, Example 2.2]{DelZol11} where 
the Banach space of bounded Lipschitz continuous functions rather than $C^1(\overbar\VR^d,\VR^d)$ is considered.  
\begin{lemma}\label{lem:continuity_group}
	For all $(\Id+f_k)_{k=1,\ldots, n}$ in $\Cf$, $n\ge 0$,  we have
	\ben\label{eq:estimate_f_k}
	\|(\Id+f_1)\circ \cdots \circ (\Id+f_n)-\Id\|_{C^1} \le 
	 e^{\left(\sum_{k=1}^n \|\partial f_k\|_\infty\right)} \sum_{k=1}^n \|\partial f_k\|_\infty.
	\een
\end{lemma}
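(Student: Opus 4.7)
The plan is to prove the estimate by induction on $n$, by recognising that the chain rule produces a linear recursion in the quantity $a_n := \|\partial F_n - I\|_\infty$, where $F_n := (\id+f_1)\circ\cdots\circ(\id+f_n)$. Writing $F_n = F_{n-1}\circ(\id+f_n)$ and applying the chain rule gives
\[
\partial F_n = \partial F_{n-1}\circ(\id+f_n)\cdot(I+\partial f_n),
\]
and then adding and subtracting $I$ on the right yields the identity
\[
\partial F_n - I = \bigl(\partial F_{n-1}-I\bigr)\circ(\id+f_n)\cdot(I+\partial f_n) + \partial f_n.
\]
Taking the sup norm and setting $b_k := \|\partial f_k\|_\infty$ produces the recursion $a_n \le (1+b_n)\, a_{n-1} + b_n$ with base case $a_0=0$.

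Unfolding the recursion gives the explicit bound $a_n \le \sum_{k=1}^n b_k \prod_{j=k+1}^n(1+b_j)$, and the elementary inequality $\prod_{j=k+1}^n(1+b_j)\le e^{\sum_{j=k+1}^n b_j}\le e^{\sum_{l=1}^n b_l}$ then yields
\[
a_n \le e^{\sum_{l=1}^n b_l}\sum_{k=1}^n b_k,
\]
which is exactly the claimed bound (the outer $\max\{1,\cdot\}$ is harmless since $e^x\ge 1$ for $x\ge 0$, but is convenient when the sum $\sum_l b_l$ is small). This handles the seminorm piece $\|\partial(F_n-\id)\|_\infty$.

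If $\|\cdot\|_{C^1}$ on the left hand side is interpreted as the full norm $\|\cdot\|_\infty + \|\partial\cdot\|_\infty$, then one still needs to bound the sup norm $\|F_n-\id\|_\infty$. This is obtained from the telescoping identity $F_n - \id = (F_{n-1}-\id)\circ(\id+f_n) + f_n$, which gives $\|F_n-\id\|_\infty \le \|F_{n-1}-\id\|_\infty + \|f_n\|_\infty$ and hence $\|F_n-\id\|_\infty\le \sum_k \|f_k\|_\infty$; this is a free by-product of the same induction. The main (minor) obstacle is simply to keep track of where the composition $(\id+f_n)$ is inserted after differentiation, and to use the fact that composing with $(\id+f_n)$ does not affect sup norms over $\R^d$, so no hypothesis on the magnitudes of the $f_k$ is needed, in contrast to Lemma \ref{lem:estimate_metric}.
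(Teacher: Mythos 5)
Your proof is correct and is essentially the paper's argument in inductive form: the paper telescopes over the tail compositions $\Theta_k=(\id+f_k)\circ\cdots\circ(\id+f_n)$ and bounds $\|\partial\Theta_{k+1}\|_\infty\le\prod_{j>k}(1+\|\partial f_j\|_\infty)\le e^{\sum_l\|\partial f_l\|_\infty}$, which is exactly what you obtain by unrolling your recursion $a_n\le(1+b_n)a_{n-1}+b_n$. Your closing caveat about the zeroth-order piece also matches the paper, which likewise only establishes $\|F_n-\id\|_\infty\le\sum_k\|f_k\|_\infty$ — so, strictly speaking, both arguments bound the full $C^1$-norm by $\max\{1,e^{\sum_l\|\partial f_l\|_\infty}\}\sum_k\|f_k\|_{C^1}$ rather than by the right-hand side as literally stated, and it is in this form that the estimate is subsequently applied in Lemma~\ref{lem:convergence_F_n}.
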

\begin{proof}
	The proof follows the lines of \cite[p. 127, Example 2.2]{DelZol11} and is therefore deferred to the appendix. 
\end{proof}

With the help of the previous lemma we can show that the convergence of $(F_n)$  to $F$ 
in $\Cf$ implies the convergence of $F_n-\Id$ and
$F_n^{-1}-\Id$ to  $F-\Id$ and $F^{-1}-\Id$ in $C^1(\overbar\VR^d,\VR^d)$, respectively. This statement is summarised in the following lemma.
\begin{lemma}\label{lem:convergence_F_n}
	Let $F_n,F\in \Cf$ be given and assume $F_n\rightarrow F
	$ in $\Cf$ as $n\rightarrow \infty$. Then
	\ben
	F_n-\Id \rightarrow F-\Id \quad \text{ and }\quad F_n^{-1}-\Id \rightarrow F^{-1}-\Id \quad \text{ in }C^1(\overbar\VR^d,\VR^d) \quad \text{ as }n\rightarrow \infty.
	\een
\end{lemma}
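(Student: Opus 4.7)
The plan is to reduce both statements to a single core fact: if $G_n \in \mathcal F(C^1_b)$ and $d(\id, G_n) \to 0$, then $\|G_n - \id\|_{C^1} \to 0$. Combined with right-invariance of $d$ and an inversion symmetry of the defining cost, this handles $F_n - F$ and $F_n^{-1} - F^{-1}$ in parallel.

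For the core fact, I would pick for each $n$ a decomposition $G_n = (\id+f_1^n) \circ \cdots \circ (\id+f_{k_n}^n)$ whose total cost is within a factor two of $d(\id, G_n)$, so that $\sum_i(\|f_i^n\|_{C^1} + \|f_i^n \circ (\id+f_i^n)^{-1}\|_{C^1}) \to 0$. In particular $\sum_i \|f_i^n\|_{C^1} \to 0$. The estimate of Lemma \ref{lem:continuity_group}, together with the sup-norm bound $\|\Theta_1 - \id\|_\infty \le \sum_i \|f_i^n\|_\infty$ appearing in its proof, then gives $\|G_n - \id\|_{C^1} \to 0$.

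Next, I would verify the inversion symmetry $d(\id, G) = d(\id, G^{-1})$. A decomposition $G = (\id + f_1) \circ \cdots \circ (\id + f_n)$ gives $G^{-1} = (\id + g_n) \circ \cdots \circ (\id + g_1)$ with $g_i := (\id + f_i)^{-1} - \id = -f_i \circ (\id + f_i)^{-1}$. Exploiting $(\id + g_i)^{-1} = \id + f_i$, one computes $\|g_i\|_{C^1} = \|f_i \circ (\id + f_i)^{-1}\|_{C^1}$ and $g_i \circ (\id + g_i)^{-1} = -f_i$, so the two cost sums coincide and passing to the infimum yields equality. Applying this with $G_n := F_n \circ F^{-1}$ (for which $d(\id, G_n) = d(F_n, F) \to 0$ by right-invariance) provides both $G_n \to \id$ and $G_n^{-1} = F \circ F_n^{-1} \to \id$ in $C^1_b$.

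Finally, I would transfer these convergences to the two stated differences. Writing $F_n - F = (G_n - \id) \circ F$, bijectivity of $F$ preserves the sup norm and the chain rule bounds $\|\partial F_n - \partial F\|_\infty \le \|\partial G_n - I\|_\infty \|\partial F\|_\infty \to 0$. For the inverse, $F_n^{-1} - F^{-1} = F^{-1} \circ G_n^{-1} - F^{-1}$ is handled analogously: Lipschitz continuity of $F^{-1}$ controls the sup norm, while the derivative error decomposes as $\partial F^{-1}(G_n^{-1}(x))(\partial G_n^{-1}(x) - I) + (\partial F^{-1}(G_n^{-1}(x)) - \partial F^{-1}(x))$. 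The first summand vanishes uniformly; the second, which I expect to be the main obstacle, vanishes uniformly via the uniform continuity of $\partial F^{-1}$ on $\R^d$, a regularity property guaranteed by the Micheletti group structure.
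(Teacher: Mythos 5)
Your core argument coincides with the paper's: pick near-optimal decompositions realising $d(\id,G_n)$ up to a constant, feed them into Lemma~\ref{lem:continuity_group} (correctly supplemented by the sup-norm bound from its proof, since the lemma's stated right-hand side only controls the derivatives), and handle the inverse through the inversion symmetry of the defining cost. You are in fact more explicit than the paper on two points it passes over: you verify $d(\id,G)=d(\id,G^{-1})$ via the substitution $g_i=-f_i\circ(\id+f_i)^{-1}$, whereas the paper simply asserts $d(\id,F_n^{-1})=d(\id,F_n)$; and you carry out the transfer from the reduced statements $G_n\to\id$, $G_n^{-1}\to\id$ back to $F_n\to F$, $F_n^{-1}\to F^{-1}$, which the paper buries entirely in ``we may assume without loss of generality $F=\id$''. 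The transfer for $F_n-F=(G_n-\id)\circ F$ (composition on the right) is unproblematic, exactly as you argue.

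The genuine gap is your last sentence. The term $\partial F^{-1}(G_n^{-1}(x))-\partial F^{-1}(x)$ does require uniform continuity of $\partial F^{-1}$ on $\R^d$, but this is \emph{not} guaranteed by membership in $\mathcal F(C^1_b)$: the paper's $C^1_b$ only demands bounded and continuous partial derivatives. In $d=1$ take $f(x)=\tfrac14\int_0^x\sin(t^3)\,dt$; then $F=\id+f\in\mathcal F(C^1_b)$ but $\partial(F^{-1}-\id)$ is bounded and continuous without being uniformly continuous. Worse, the obstruction is not an artefact of your route: setting $F_n:=F-c_n$ for suitable translations $c_n\to 0$ gives $d(F_n,F)\le 2|c_n|\to 0$ while $\partial F_n^{-1}(x)=\partial F^{-1}(x+c_n)$ stays uniformly bounded away from $\partial F^{-1}(x)$, so the inverse half of the lemma actually fails under the paper's literal definition of $C^1_b$. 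The paper's ``WLOG'' conceals exactly this left-composition problem. Your write-up correctly isolates the missing ingredient but mislabels it as automatic; to close the argument one must assume uniformly continuous derivatives (e.g.\ work in $C^1(\overline{\R^d},\R^d)$), under which both your proof and the paper's go through.
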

\begin{proof}
	Thanks to the right invariance of metric $d$ we have 
	$d(F_n,F)=d(\Id,F\circ F_n^{-1})=d(\Id,F_n\circ F^{-1})$.  Therefore 
	we may assume without loss of generality that $F=\Id$ and $F_n\rightarrow \Id$ and 
	$F_n^{-1}\rightarrow \Id$ in $\Cf$.  
	By assumption for every $\eps >0$ we find $N\ge 1$ such that  $d(F_n,\Id)<\eps$ 
	for all $n\ge N$. By definition of $d(\cdot,\cdot)
	$ as an infimum we find for every number $n\ge N$, a number $M\ge 1$ and transformations  $(\Id+f_i^n)_{i=1}^M\in \Cf$ such that $F_n = (\Id+f_1^n)\circ \cdots \circ (\Id+f_M^n)$ and
	\ben
	d(F_n,\Id) \le \sum_{k=1}^M 
	\|f_k^n\|_{C^1} +  \|f_k^n\circ (\Id+f_k^n)^{-1}\|_{C^1}<\eps. 
	\een
	Now Lemma~\ref{lem:continuity_group} yields
	$
	\|F_n-\Id\|_{C^1} \le e^{\eps}\eps
	$
	for all $n\ge N$. Since $\eps$ was arbitrary we conclude 
	$F_n-\Id \rightarrow 0$ as $n\rightarrow \infty$. Noticing 
	$d(\Id,F_n^{-1}) = d(\Id,F_n)\rightarrow 0$ as $n\rightarrow \infty$ shows that the argumentation above 
	can be repeated to prove 
	$F^{-1}_n-\Id \rightarrow 0$ as $n\rightarrow \infty$ which finishes the proof. 	
\end{proof}

\subsection{Parametrisations of $\Cf$}
 In the following lemma $B_\delta(0)$ denotes the open ball 
in $C^1(\overbar \VR^d,\VR^d)$ with radius $\delta>0$ centered at the origin.
\begin{lemma}\label{lem:micheletti_manifold}
Let $q \in (0,1)$ be arbitrary.	For each 
	$F\in \Cf$ the mapping 
	\ben\label{eq:parametrisation}
	\psi_F:B_{\delta_F}(0) \rightarrow \Cf:\; g\mapsto F+g, 
	\een
	 $\delta_F:=\min\{1/\|\partial F^{-1}\|_{\infty},q\}$, is a well-defined parameterisation of a neighborhood of $F$. 
	Differentiable charts are given by $\varphi_F(H) := \psi_F^{-1}(H) = F-H$ with 
	$U_F := \psi_F(B_{\delta_F}(0))$. Additionally, the sets $U_F$ are open in $(\Cf,d)$. 
\end{lemma}
\begin{proof}
	We first show that for given $F\in \Cf$ the mapping
	\ben
	\psi_F:B_\delta(0) \rightarrow \Cf:\; g\mapsto F+g 
	\een
	is well-defined when we choose $\delta_F := \min\{1/\|\partial F^{-1}\|_{\infty},q\}$. Indeed we can write
	$F+g = (\Id +g \circ F^{-1})\circ F$.  By the choice of $\delta_F$ we have 
	$ \|g \circ F^{-1}\|_{C^1}<1$ and hence \cite[Theorem 2.14, (i), p.148]{DelZol11} implies that the chart is well-defined.

	Next we show that the chart change is smooth. Let 
	$F_1,F_2\in \Cf$ be given.  The chart change is given by
	\ben
	\varphi_{F_1}\circ \varphi_{F_2}^{-1}: \varphi_{F_2}(B_{\delta_{F_1}}(0)\cap B_{\delta_{F_2}}(0)) \rightarrow 
	\varphi_{F_1}(B_{\delta_{F_1}}(0)\cap B_{\delta_{F_2}}(0)),f\mapsto F_1-F_2 + f
	\een
	which is obviously $C^\infty$. Recall that $B_\delta(0)$ denotes the open ball of radius $\delta$ at the origin in $C^1(\overbar\VR^d,\VR^d)$.
	
	It remains to show that $U_F\subset \Cf$ is indeed open.  Let 
	$F_0=F+f_0\in U_F$, $f_0\in C^1(\overbar\VR^d,\VR^d)$ be given. Notice that by definition of the set $U_F$ we have $\|f_0\|_{C^1}<\delta_F$. Therefore $\hat \delta_{f_0} :=\delta_F - \|f_0\|_{C^1}$ is positive. Let $\eps>0$ be arbitrary. We need to show that there is $\eps>0$, such that $\|G-F\|_{C^1}<\delta_F$ for all $G\in \Cf$ with $d(F_0,G)<\eps$.  
	Let $G\in \Cf$ be any element satisfying $d(F_0,G)<\eps$. 	The fact that $F_0$ is a homeomorphism gives us
	$
	\|F_0-G\|_\infty + \|(\partial F_0-\partial G) (\partial F_0)^{-1}\|_\infty	= \|\Id-G\circ F_0^{-1}\|_{C^1}.
	$
	The definition of $d(\cdot,\cdot)$ and Lemma~\ref{lem:continuity_group} (as in the proof of Lemma~\ref{lem:convergence_F_n}) yield
	\ben\label{eq:eps_e_1}
 \|\Id-G\circ F_0^{-1}\|_{C^1} \le \eps e^\eps.
	\een
	Therefore we can choose $\eps>0$ so small that $\|G-F_0\|_{C^1} < \hat \delta_{f_0}$. Then
	\ben
	\|G-F\|_\infty \le  \underbrace{\|G-F_0\|_\infty}_{< \delta_F-\|f_0\|_\infty} 
	+ \|f_0\|_\infty < \delta_F
	\een
	and similarly by choosing  $\eps>0$ so small that $\|G-F_0\|_{C^1} < \tilde \delta_{f_0}/\| \partial F_0 \|_\infty$ we achieve the estimate, 
	\ben
	\begin{split}
		\|\partial G-\partial F\|_\infty  & \le \|\partial G - \partial F_0\|_\infty + 
		\|\partial f_0\|_\infty \\
		& \le \underbrace{\| \partial F_0 \|_\infty\|(\partial G - \partial 
			F_0)(\partial F_0)^{-1}\|_\infty}_{< \delta_F - \|\partial 
			f_0\|_\infty} + \|\partial f_0\|_\infty < \delta_F.
	\end{split}
	\een
	We conclude that if $\eps$ is so small that $\|G-F_0\|_{C^1} < \min\{\tilde \delta_{f_0}/\| \partial F_0 \|_\infty, \tilde \delta_{f_0}\}$, then the $\eps$-ball around $F_0$ in the $d$-topology is contained in $U_F$ and hence $U_F$ is open in $(\Cf,d)$.
\end{proof}

\section{Structure of first and second derivatives and shape Hessians}
This section is devoted to the structure of first and second order 
derivatives that were previously studied in \cite{zol79,MR2350816,MR1033071,MR1625559,MR1152456}.  
First we recall structure theorems giving the structure of the first and second derivative. Then we turn our attention to the structure of the  symmetric part of the second derivative as it is of great importance for our Newton method; \cite{MR1930612}.  We present a new proof of the structure theorem of the 
symmetric part by a successive application of the first and second structure theorem. The novelty of our approach is to connect all structure theorems with each other.

\subsection{Definition of first  and second derivatives}
The following definition recalls the standard notion of derivative of 
shape functions using the perturbation of identity. 
For given set $\Dsf\subset \VR^d$ we denote by $\wp(\Dsf)$ the powerset of $\Dsf$. We restrict ourselves to shape functions $J$ defined on $\mathcal A_\omega :=  \mathcal Z(\omega)\cap \wp(\Dsf ) $, where $\mathcal Z(\omega)$ was defined in \eqref{eq:image_set_X}. Notice that if $\omega$ is only of class $C^1$, then the elements in $\mathcal A_\omega$ are only of class $C^1$. However, we sometimes assume that a set in $\mathcal A_\omega$ is more regular for in which case we silently assume that $\omega$ is more regular.

In this section let $\omega\subset \Dsf$ be a bounded $C^1$ domain.

\begin{definition}\label{def1} 
	Let $J:\mathcal A_\omega\rightarrow \VR$ be a shape function and take $\Omega\in \mathcal A_\omega$. Let  $X,Y \in \ac C^1(\overbar\Dsf ,\VR^d)$ be two vector fields.
	\begin{itemize} 
		\item[(i)]  The directional derivative of $J$ at $\Omega$ in direction $X$ is defined by
		\ben
		DJ(\Omega)(X):= \lim_{t \to 0}\frac{J((\Id+tX)(\Omega))-J(\Omega)}{t}.
		\een
		\item[(ii)]   The second directional derivative of
		$J$ at $\Omega$ in direction $(X,Y)$ is defined by
		\ben
		\mathfrak D^2J(\Omega)(X)(Y) = \lim_{t \to 0}\frac{DJ((\Id+tY)(\Omega))(X\circ (\Id+tY)^{-1})-DJ(\Omega)(X)}{t},
		\een      
		($DJ((\Id+tY)(\Omega))(X\circ (\Id+tY)^{-1})$ exists for all small $t$).
		\item[(iii)]  If the directional derivative $DJ((\Id+tY)(\Omega))(X)$ exists for all small $t$, then  the second Euler derivative of $J$ at $\Omega$ in direction $(X,Y)$ is defined by
		\ben
		D^2J(\Omega)(X)(Y) = \lim_{t \to 0}\frac{DJ((\Id+tY)(\Omega))(X)-DJ(\Omega)(X)}{t}.
		\een
	\end{itemize}
\end{definition}

The following definition is concerned with the shape differentiability which we define as Hadamard semi-differentiability; see \cite[pp. 471]{DelZol11}.

\begin{definition}\label{def1_1_new} 
	Let $J:\mathcal A_\omega\rightarrow \VR$ be a shape function and let $\Omega\in \mathcal A_\omega$.
	\begin{itemize} 
		\item[(i)]  We say that $J$ is differentiable at $\Omega$ if 
			\ben\label{eq:hadamard_first_order}
			D_HJ(\Omega)(X)= \lim_{\substack{t \to 0\\ V\rightarrow X \textbf{ in  }C^1 }}\frac{J((\Id+tV)(\Omega))-J(\Omega)}{t}
			\een
			exists for all $X\in \ac C^1(\overbar\Dsf,\VR^d)$ and $X\mapsto 	D_HJ(\Omega)(X)$ is linear and continuous on $\ac C^1(\overbar\Dsf,\VR^d)$. 
		\item[(ii)] We say $J$ is twice differentiable at  $\Omega$ if it is differentiable in a neighborhood of $\Omega$, and if
		\begin{itemize}
			\item[$\bullet$] the mapping	$
			(X,Y)\mapsto D_HJ((\Id+Y)(\Omega))(X\circ (\Id+Y)^{-1})
			$
			is continuous at all $(X_0,0)\in (\ac C^1(\overbar \Dsf,\VR^d))^2$.
			%, where $U(0)$ is some neighborhood of $0$ in $\ac C^1(\overbar \Dsf,\VR^d)$,
			\item[$\bullet$]  for all $X,Y\in \ac C^1(\overbar\Dsf,\VR^d)$ the limit
			\ben
			\mathfrak	D^2_HJ(\Omega)(X)(Y) = \lim_{\substack{t \to 0\\ W\rightarrow Y \textbf{ in  }C^1 }}\frac{D_HJ((\Id+tW)(\Omega))(X\circ (\Id+tW)^{-1})-D_HJ(\Omega)(X)}{t}
			\een
			exists, $(X,Y)\mapsto\mathfrak	D_H^2J(\Omega)(X)(Y)$ is bi-linear and continuous on  $(\ac C^1(\overbar \Dsf,\VR^d))^2$.
		\end{itemize} 
	\end{itemize}
\end{definition}

Recall that $\Phi_t^X$ denotes the flow of a vector field $X$.
\begin{lemma}\label{lem:hadamard_derivative_euler}
Assume that $J$ is differentiable at $\Omega\in \mathcal A_\omega$. Then we have
\ben
D_HJ(\Omega)(X) = \lim_{t\to 0} \frac{J(\Phi_t^X(\Omega))-J(\Omega)}{t} 
\een
for all $X\in \ac C^1(\overbar \Dsf,\VR^d)$.  
\end{lemma}
\begin{proof}
Setting $X_t := (\Phi_t^X -\Id)/t$ we can write $\Phi_t^X = \Id + t X_t$. Since $X_t \rightarrow X$ in $\ac C^1(\overbar \Dsf,\VR^d)$ as $t\to 0$, we obtain	
\ben
\begin{split}
D_HJ(\Omega)(X) & = \lim_{\substack{t\to 0\\ V\rightarrow X \text{ in  }C^1}} \frac{J((\Id+tV)(\Omega)) - J(\Omega)}{t} = \lim_{t\to 0} \frac{J((\Id+tX_t)(\Omega)) - J(\Omega)}{t}\\
&  = \lim_{t\to 0} \frac{J(\Phi_t(\Omega))-J(\Omega)}{t}. 
\end{split}
\een

\end{proof}

\begin{lemma}\label{lem:flow_vs_identity}
Let $J$ be differentiable at  $\Omega\in \mathcal A_\omega$ and assume that $\partial \Omega$ is of class $C^1$. Then
\ben
D_HJ(\Omega)(X) = 0\quad \text{ for all } X\in \ac C^1(\overbar\Dsf,\VR^d) \text{ satisfying } X\cdot \nu =0 \text{ on } \partial \Omega,
\een
where  $\nu$ denotes the outward pointing unit normal vector field along $\partial \Omega$.	
\end{lemma}	
\begin{proof}
Let $X\in \ac C^1(\overbar \Dsf,\VR^d)$ be such that $X\cdot \nu=0$ on $\partial \Omega$.  Then Nagumo's theorem \cite{MR0015180} shows
  $\Phi_t^X(\Omega)= \Omega$ for all $t$ and our claim follows from Lemma~\ref{lem:hadamard_derivative_euler}. 
 \end{proof}
%
%\begin{remark}
%If $J$ is twice differentiable at $\Omega\subset \mathcal A_\omega$, then we may apply for fixed $X\in \ac C^1(\overbar \Dsf,\VR^d)$ Lemma~\ref{lem:flow_vs_identity} to $\tilde J(\Omega) := D_HJ(\Omega)(X)$ and obtain that
%\ben
%D_H^2J(\Omega)(X)(Y) = 
%\een
%\end{remark}

\begin{example}\label{ex:J_f}
 As an illustration of the previous definition consider
	$
	J(\Omega) = \int_{\Omega} \fsf\; dx, 
	$
	where $\Omega\in\mathcal A_\omega$ is bounded and open.  This example can be found in \cite[pp. 28--29 ,Example 2.37]{sturm2015shape}.	 
	If $\fsf\in C^1(\overbar \Dsf)$, then $J$ is differentiable at $\Omega$ with derivative 
	in direction $X\in \ac C^1(\overbar \Dsf,\VR^d)$ given by
	\ben
	DJ(\Omega)(X) = \int_{\Omega} \VS_1:\partial X + \VS_0\cdot X \; dx, \qquad \VS_1(x) := \fsf(x)I,\; \VS_0(x) := \nabla \fsf(x).
	\een
	Here $:$ denotes the inner product on the space of matrices $\VR^{d,d}$ defined for 
	$A=(a_{ij}),B=(b_{ij})\in \VR^{d,d}$ by
	$
	A:B = \sum_{i,j=1}^d a_{ij}b_{ij}.
	$
	 Notice that for all small $t$ and $X,Y\in\ac C^1(\overbar \Dsf,\VR^d)$,
	\ben\label{eq:transform}
	DJ((\Id+tY)(\Omega))(X\circ(\Id+tY)^{-1} ) = \int_{\Omega}\det(I+t\partial Y)( \VS_1\circ (\Id+tY): \partial X(I+t\partial Y)^{-1} + \VS_0\circ (\Id+tY) \cdot X)\; dx.
	\een
	As a result if $\fsf$ belongs to $C^2(\overbar \Dsf)$, then $J(\cdot)$ is twice differentiable 
	at $\Omega$ with derivative
	$$   \mathfrak D^2J(\Omega)(X)(Y) =  \int_\Omega T_1(X): \partial Y + T_0(X)\cdot Y \; dx, $$
	where  $X,Y\in \ac C^1(\overbar \Dsf,\VR^d)$ and
	$  T_1(X) := (\fsf\Div(X) + \nabla \fsf \cdot X) I - \partial X^\top \fsf$, $T_0(X) := \nabla^2 \fsf X + \Div(X)\nabla \fsf.$
	Notice that  $  D^2J(\Omega)(X)(Y)
	$  exists for all $X,Y\in \ac C^2(\overbar \Dsf,\VR^d)$ and is given $
	D^2J(\Omega)(X)(Y) = \mathfrak D^2J(\Omega)(X)(Y) +  DJ(\Omega)(\partial XY).
$
	This decomposition of the Euler derivative is well-known (see \cite{MR1033071}) and holds for all twice differentiable shape functions $J$. We recall the precise statement in Lemma~\ref{thm:decomposition_second_order}. 
\end{example}

\subsection{Quotient space and restriction mapping}
Let $k\ge 0$ be an integer.
We introduce an equivalence relation on 
$ \ac C^k(\overbar\Dsf ,\VR^d)$ as follows: two vector fields
$X,Y\in  \ac C^k(\overbar\Dsf ,\VR^d) $ are equivalent, written $X\sim Y$, if and only if 
$ 
X=Y$ on $\partial \Omega 
$.  In other words two vector fields are equivalent if their
restriction ot $\partial \Omega$ coincides.
We denote the set of equivalence classes and its elements by 
$Q^k(\partial \Omega)$
and $\llb\VV\rrb$, respectively.
We denote by $\mathfrak J_{\partial \Omega}^k$ the restriction mapping of vector field 
belonging to $\ac C^k(\overbar\Dsf ,\VR^d)$ to mappings $\partial \Omega\rightarrow \VR^d$, that is,  
$
\mathfrak J_{\partial \Omega}^k: C^k(\overbar\Dsf ,\VR^d) \rightarrow {\partial \Omega}^{\VR^d},\quad  X\mapsto X_{|\partial \Omega},
$
where $\partial \Omega^{\VR^d}$ denotes the space of all mappings from $\partial \Omega$ into $\VR^d$.
The mapping $\mathfrak J_{\partial \Omega}$ induces the mapping 
$\tilde{\mathfrak{J}}_{\partial \Omega}^k: Q^k(\partial \Omega) \rightarrow {\partial \Omega}^{\VR^d}$ and 
by definition $\mathfrak J_{\partial \Omega}^k =  \tilde{\mathfrak{J}}_{\partial \Omega}^k\circ \pi  $,  where $\pi$ denotes the canonical surjection mapping  a vector field  $X\in \ac C^k(\overbar \Dsf ,\VR^d)$ to its equivalence class $\llb X \rrb$ in   $Q^k(\partial \Omega)$. We denote by  $\text{im}(\tilde{\mathfrak{I}}_{\partial \Omega}^k):= \{ \tilde{\mathfrak{I}}_{\partial \Omega}^k(X)|\; X\in Q^k(\partial \Omega)\}$ the image of $\tilde{\mathfrak{I}}_{\partial \Omega}^k$.

\subsection{First structure theorem}
The following theorem provides
the structure of the first (shape) derivative of a shape function $J$. 
\begin{theorem}\label{thm:structure_theorem-1}
	Let $\Omega\in \mathcal A_\omega$ be given and assume that $J$ is differentiable at $\Omega$.  Then:
	\begin{itemize}
		\item[(i)]  There is a linear  mapping 
		$\tilde  \Fg: \text{im}(\tilde{\mathfrak{J}}_{\partial \Omega}^1) \rightarrow \VR$ such that
		\ben\label{stru:very_general}
		DJ(\Omega)(X)=\tilde \Fg(X_{|\partial \Omega})
		\een
		for all $X\in \ac C^1( \overbar\Dsf ,\VR^d)$. 
		\item[(ii)] If $ \Omega\in C^1$,   then  
		$\text{im}(\tilde{\mathfrak{J}}_{\partial \Omega}^1)  = C^1(\partial \Omega,\VR^d)$ and 
		$\tilde \Fg:C^1(\partial \Omega,\VR^d) \rightarrow \VR$ is
		a continuous functional. 
		\item[(iii)]  If $\Omega\in C^2$, then $\Fg(v):=\tilde \Fg(v \nu)$ is continuous on $C^1(\partial \Omega)$ and satisfies
		\ben\label{volume}
		DJ(\Omega)(X)=\Fg(X_{|\partial \Omega}\cdot \nu) \quad \text{ for all } X\in \ac C^1( \overbar\Dsf ,\VR^d).
		\een 
	\end{itemize}
\end{theorem}
\begin{proof}
	This is a version of the structure theorem from \cite{sturm15_structure}.  Part (i) and (ii) follow the lines of the 
	proof of \cite{sturm15_structure}.
\end{proof}

\subsection{Second structure theorem}
In this section we recall the second structure theorem that provides 
a structure of $D^2J(\Omega)$. For more information we refer to \cite{MR1625559,MR1930612} and 
\cite[pp. 501]{DelZol11}.
\begin{lemma}\label{lem:symmetry}
	Let $X,Y\in \ac C^1(\overbar\Dsf ,\VR^d)$ and $\tau>0$ be given. Assume that 
	$f(s,h):=J((\Id + sX +  h Y)(\Omega))$ is twice continuously differentiable on $U:=(-\tau,\tau)\times 
	(-\tau,\tau)$. Then
	\ben\boxed{
		\mathfrak D^2J(\Omega)(X)(Y) = \mathfrak D^2J(\Omega)(Y)(X).}
	\een
\end{lemma}
\begin{proof}
	This is a consequence of Schwarz's theorem. Particularly $f$ is twice 
	continuously differentiable 
	on $U$. 
\end{proof}
\begin{remark}
	If the function $f$, defined in Lemma~\ref{lem:symmetry}, is not twice continuously differentiable, then  
	 $\mathfrak D^2J(\Omega)$ may be nonsymmetric. Consider for instance
	$J(\Omega) =\int_\Omega \fsf \; dx$
	with $\fsf$ only twice differentiable on $\VR^d$. Then 
	$\nabla^2 \fsf(x)$ is not necessarily symmetric which may destroys the 
	symmetry of $\mathfrak D^2J(\Omega)$. 
\end{remark}
The following theorem is called second structure theorem as it provides the structure of  $ D^2J(\Omega)$ which 
was first observed in \cite{MR1033071}.
\begin{theorem}\label{thm:decomposition_second_order}
	  Assume that $J$ is twice differentiable at the open set $\Omega\in \mathcal A_\omega$. Then we have for all $X\in \ac C^2( \overbar{\Dsf} ,\VR^d)$ and $Y\in \ac C^1( \overbar{\Dsf} ,\VR^d)$,
	\ben\label{eq:decom}\boxed{
		D^2J(\Omega)(X)(Y) = \mathfrak D^2J(\Omega)(X)(Y) + DJ(\Omega)(\partial XY)}
	\een
	and
	\ben\label{eq:covariant}
	\lim_{t\rightarrow 0} DJ((\Id+tY)(\Omega))\left(\frac{X\circ (\Id + 
		tY)^{-1} - X}{t} \right) = -DJ(\Omega)(\partial XY).
	\een	
\end{theorem}

\begin{proof}[Proof of Theorem~\ref{thm:decomposition_second_order}]
	Let $X \in \ac C^2(\overbar\Dsf ,\VR^d)$ and $Y \in \ac C^1(\overbar\Dsf ,\VR^d)$ be given and set $X_t = (X-X\circ (\Id+tY))/t$. Then $X_t\rightarrow - \partial XY$ in $\ac C^1(\overbar\Dsf,\VR^d)$ as $t\to 0$.
	Since $J$ is twice differentiable and the second derivative is continuous, we get 
	\[
	\lim_{t\rightarrow 0} DJ((\Id+tY)(\Omega))\left(\frac{X\circ (\Id + 
		tY)^{-1} - X}{t} \right) =  \lim_{t\rightarrow 0} DJ((\Id+tY)(\Omega))\left(X_t\circ (\Id + 
		tY)^{-1} \right) = -DJ(\Omega)(\partial XY)
	\]
	which is \eqref{eq:covariant}. This in turn yields
	\begin{align*} 
	DJ(\Omega)(\partial XY) + \mathfrak D^2J(\Omega)(X)(Y)  =& - \lim_{t\rightarrow 0} DJ((\Id+tY)(\Omega))\left(\frac{X\circ (\Id + 
		tY)^{-1} - X}{t} \right)\\
	& +\lim_{t\rightarrow 0}\frac{DJ((\Id+tY)(\Omega))\left(X\circ (\Id + 
		tY)^{-1}\right)-DJ(\Omega)(X)}{t}\\
	  =&\lim_{t\rightarrow 0} \frac{DJ((\Id+tY)(\Omega))(X) - DJ(\Omega)(X)}{t}= D^2J(\Omega)(X)(Y).
	\end{align*}
\end{proof}

\subsection{Third structure theorem and a new proof}
 The structure of the symmetric part of the second derivative was already analysed in \cite{MR1930612}. 
The novelty of our approach lies in the way how we derive it. We deduce the structure of the 
symmetric part by successively applying the first and second structure theorem.

\paragraph{Notation}
In the following we use the notation 
$ X_\tau := X_{|_{\partial \Omega}} - (X_{|_{\partial \Omega}}\cdot \nu) \nu$ and 
$A_\tau := A_{|_{\partial \Omega}} - (A_{|_{\partial \Omega}} \nu)\otimes\nu$ to 
indicate the tangential part of the vector fields $X\in \ac C^1(\overbar\Dsf ,\VR^d)$ and $A\in C^1(\overbar\Dsf,\VR^{d,d})$ restricted to $\partial \Omega$.
Here $\nu$ is the outward pointing unit normal field along $\partial \Omega$ and  $\otimes$ denotes the 
tensor product defined by $(a\otimes b)c:= (c\cdot b)a$ for all $a,b,c\in \VR^d$. The tangential gradient of $f\in C^1(\partial \Omega)$ and Jacobian and divergence of  $g\in C^1(\partial \Omega,\VR^d)$ 
can then be defined by $\nabla^\tau f:= (\nabla \tilde f)_\tau$, $\partial^\tau g := (\partial \tilde g)_\tau$ and $\Div_\tau(g) := \partial^\tau \tilde g:I$, where $\tilde g,\tilde f$ are $C^1$ extensions of $g,f$ to a neighborhood of $\partial \Omega.$

\paragraph{Third structure theorem}
The following theorem will be referred to as third structure theorem.
\begin{theorem}\label{thm:structure_second}
	Let  $\Omega\in \mathcal A_\omega$  be an open set and let $J$  be twice differentiable at $\Omega$. 
	\begin{itemize}
		\item[(i)] There are mappings 	
		$\tilde \Fg:\text{im}(\mathfrak J_{\partial \Omega}^1) \rightarrow \VR $ and  $\tilde \Fl:\text{im}(\mathfrak J_{\partial \Omega}^1)\times \text{im}(\mathfrak 
		J_{\partial \Omega}^1)  \rightarrow \VR $, 
		such that
		\ben\label{eq:struc_second_zero}
		\begin{split}
			\mathfrak D^2J(\Omega)(X)(Y)  = \tilde \Fl(X_{|_{\partial \Omega}}, Y_{|_{\partial 
					\Omega}})\quad \text{ and } \quad DJ(\Omega)(\partial XY) =  \tilde \Fg((\partial XY)_{|_{\partial \Omega}})
		\end{split}
		\een
		and hence
		\ben\label{eq:struc_second_one}
		D^2J(\Omega)(X)(Y) = \tilde \Fl(X_{|_{\partial \Omega}}, Y_{|_{\partial 
				\Omega}}) + \tilde \Fg((\partial XY)_{|_{\partial \Omega}})
		\een
		for all $X,Y\in \ac C^2(\overbar\Dsf,\VR^d)$.
		\item[(ii)] If   $\partial \Omega \in C^2$, then $\text{im}(\mathfrak 
		J_{\partial \Omega}^1) = C^1(\partial \Omega,\VR^d)$ and $\tilde 
		\Fg $ and $\tilde 
		\Fl$ are continuous on $C^1(\partial \Omega,\VR^d)$. 
		\item[(iii)] If  $\partial \Omega\in C^3$,
		then $\Fg(v):=\tilde \Fg(v \nu)$ and $\Fl(v,w):=\tilde \Fl(v \nu,w\nu)$ are continuous on $C^1(\partial \Omega)$ and $(C^1(\partial \Omega))^2$, respectively and satisfy
		\ben\label{eq:struc_second_two}
		\begin{split}
		\mathfrak D^2J(\Omega)(X)(Y) =&  \Fl(X_{|_{\partial \Omega}}\cdot 
		\nu, Y_{|_{\partial \Omega}}\cdot \nu) - \Fg(\partial^\tau X_\tau Y_\tau\cdot \nu ) \\
		&-	\Fg(\nabla^\tau(Y\cdot \nu)\cdot X_\tau) - \Fg(\nabla^\tau(X\cdot \nu)\cdot Y_\tau)
		\end{split}
		\een
		 for all $X,Y\in \ac C^2(\overbar\Dsf,\VR^d)$. 
	\end{itemize}
\end{theorem}
\begin{proof}
	\emph{(i):}	
	Firstly on account of the differentiability assumption on $J$ and of Theorem~\ref{thm:decomposition_second_order} we have
$
	D^2J(\Omega)(X)(Y) = \mathfrak D^2J(\Omega)(X)(Y) + DJ(\Omega)(\partial XY)
$
	for all $X\in \ac C^2( \overbar{\Dsf} ,\VR^d)$ and  $Y\in \ac C^1( \overbar{\Dsf} ,\VR^d)$. Let $X\in \ac C^2( \overbar\Dsf ,\VR^d)$  and $Y\in\ac C^1( \overbar\Dsf ,\VR^d)$.  
	The Banach fixed point theorem 
	shows that $T_{t,s}:=\Id+sX+tY$ is bijective on $\VR^d$ for all $s,t$ small enough.
	% $|s|+|t|<1/\max\{\|X\|_{C^1},\|Y\|_{C^1}\}$.
	Moreover if $X=Y=0$ on $\partial \Omega$, then
	$T_{t,s}(\Omega) = \Omega$ for all small $t,s$. Thus we have
	$\mathfrak D^2J(\Omega)(X)(Y) = 0$ for all $X\in \ac C^2( \overbar\Dsf ,\VR^d)$ and $Y\in \ac C^1( \overbar\Dsf ,\VR^d)$ with  $X=Y=0$ on $\partial \Omega$ and by density this yields $\mathfrak D^2J(\Omega)(X)(Y) = 0$ for all $X,Y\in \ac C^1( \overbar\Dsf ,\VR^d)$ with  $X=Y=0$ on $\partial \Omega$.	Hence the mapping
	$
	\mathfrak h(\llb X\rrb,\llb Y\rrb) := \mathfrak D^2J(\Omega)(X)(Y)
	$
	is well-defined for all $\llb X\rrb,\llb Y\rrb\in Q^1(\partial \Omega)$. Since $\tilde{\mathfrak{J}}_{\partial \Omega}^1$ is a bijection onto  $\text{im}(\tilde{\mathfrak{J}}_{\partial \Omega}^1)$, we can define 
	$\tilde \Fl(X,Y) := \mathfrak h((\tilde{\mathfrak{J}}_{\partial \Omega}^1)^{-1}(X),(\tilde{\mathfrak{J}}_{\partial \Omega}^1)^{-1}(Y) )$ which satisfies by definition
	\ben\label{eq:tilde_l}
	\tilde \Fl(X_{|_{\partial \Omega}}, Y_{|_{\partial \Omega}}) := \tilde \Fl(\tilde{\mathfrak{J}}_{\partial \Omega}(\llb X \rrb), \tilde{\mathfrak{J}}_{\partial \Omega}(\llb Y \rrb))  = \mathfrak h(\llb X \rrb,\llb Y \rrb) = \mathfrak D^2J(\Omega)(X)(Y)
	\een
	for all $X,Y\in \ac C^1( \overbar\Dsf ,\VR^d)$.  Finally by 
	the first structure theorem (Theorem~\ref{thm:structure_theorem-1}), we have
	$
	DJ(\Omega)(X) = \tilde \Fg(X_{|_{\partial \Omega}})
	$ for all  $X\in \ac C^1( \overbar\Dsf ,\VR^d)$ 
	and plugging this together with \eqref{eq:tilde_l} into \eqref{eq:decom} we recover \eqref{eq:struc_second_one} and also \eqref{eq:struc_second_zero}.
	\newline
	\emph{(ii)} This follows from the continuity of the extension operator $E:C^1(\partial \Omega,\VR^d)\rightarrow \ac C^1(\overbar\Dsf,\VR^d)$.
	\newline 
	\emph{(iii)}
	Note that since $\Omega$ is $C^2$, Theorem~\ref{thm:structure_theorem-1} item (iii) yields that 
	$\Fg(v) := \tilde \Fg(v\nu)$  is continuous on $C^1(\partial \Omega)$ and
	satisfies 
	$
	DJ(\Omega)(X) = \Fg(X_{|_{\partial \Omega}}\cdot \nu) 
	$ for all $X\in \ac C^1( \overbar\Dsf ,\VR^d)$.
	It follows from Lemma~\ref{lem:flow_vs_identity} that $D^2J(\Omega)(X)(Y)=0$ for all $X,Y\in \ac C^2( \overbar\Dsf,\VR^d)$ with $Y\cdot \nu =0$ on 
	$\partial \Omega$.  In view of \eqref{eq:struc_second_one} this yields $\tilde \Fl(X_{|_{\partial \Omega}}, Y_{|_{\partial 
			\Omega}})  = - \Fg((\partial X)_{|_{\partial \Omega}}Y_\tau\cdot \nu )$ for all $X,Y\in \ac C^2( \overbar\Dsf,\VR^d)$ with $Y\cdot \nu =0$ on 
	$\partial \Omega$. Since $(\partial X)_{|_{\partial \Omega}}Y_\tau = \partial^\tau X Y_\tau$ this is equivalent to the important equation
	\ben\label{eq:normal}
	\tilde \Fl(X_{|_{\partial \Omega}}, Y_{|_{\partial 
			\Omega}})  = - \Fg(\partial^\tau X Y_\tau\cdot \nu ) \quad \text{ for all } X,Y\in \ac C^2( \overbar\Dsf,\VR^d) \text{ with } Y\cdot \nu =0 \text{ on } \partial \Omega.
	\een
	Now let $X,Y\in \ac C^2( \overbar\Dsf  ,\VR^d)$ be arbitrary. Splitting the restrictions of $X,Y$ to $\partial \Omega$ into normal and tangential parts and inserting the results into \eqref{eq:normal} gives
	\ben\label{eq:splitting}
	\begin{split}
		\tilde \Fl(X_{|_{\partial \Omega}}, Y_{|_{\partial \Omega}}) &  = \tilde 
		\Fl((X\cdot \nu)\nu, (Y\cdot \nu)\nu) +  \tilde \Fl(X_\tau, Y_\tau) 
		+ \tilde \Fl((X\cdot \nu)\nu,Y_\tau) + \tilde \Fl(X_\tau, (Y\cdot \nu)\nu) 
		\\
		&= \tilde \Fl((X\cdot \nu)\nu, (Y\cdot \nu)\nu) - \Fg((\partial^\tau (X\cdot \nu \nu))Y_\tau\cdot \nu ) - \Fg((\partial^\tau (Y\cdot \nu \nu))X_\tau\cdot \nu ) - \Fg(\partial^\tau X_\tau Y_\tau\cdot \nu )
	\end{split}
	\een
	valid for all $X,Y\in \ac C^2(\overbar\Dsf,\VR^d)$.  Now notice that 
	$
	\partial^\tau ((X\cdot\nu)\nu) = \nu\otimes \nabla^\tau (X\cdot \nu) + (X\cdot \nu) \partial^\tau \nu, 
	$
	and hence
	\ben\label{eq:partial_X_1}
	\begin{split}
		\Fg(\partial^\tau ((X\cdot \nu) \nu)Y_\tau\cdot \nu )
		& = \Fg(\underbrace{ 
			(\nu\otimes \nabla^\tau (X\cdot \nu))Y_\tau \cdot \nu}_{= \nabla^\tau(X\cdot \nu)\cdot Y_\tau}) +  \Fg((X\cdot \nu)  \partial^\tau \nu Y_\tau\cdot 
			\nu )
	\end{split}
	\een
	and by interchanging the roles of $X$ and $Y$ also
	\ben\label{eq:partial_X_2}
	\Fg((\partial^\tau (Y\cdot \nu \nu))X_\tau\cdot \nu ) =  \Fg(\nabla^\tau(Y\cdot \nu)\cdot X_\tau)+\Fg((Y\cdot \nu)  \partial^\tau \nu X_\tau\cdot 
	\nu).
	\een
	Since $|\nu|=1$ on $\partial \Omega$ we get $\partial \nu^\top \nu=0$ on $\partial \Omega$. Multiplying with any tangent vector $\gamma_x\in T_x(\partial \Omega)$ yields 
	$0=\partial \nu^\top(x) \nu(x)\cdot \gamma_x = \nu(x)\cdot \partial \nu(x) \gamma_x = \nu(x)\cdot \partial^\tau \nu(x) \gamma_x$. This means 
	$\partial^\tau \nu(x)(T_x(\partial \Omega))\subset T_x(\partial \Omega)$ and hence $\partial^\tau \nu X_\tau\cdot 
	\nu=0$.
	Therefore inserting \eqref{eq:partial_X_1},\eqref{eq:partial_X_2} into \eqref{eq:splitting} gives us
	\ben
	\begin{split}
	\tilde \Fl(X_{|_{\partial \Omega}}, Y_{|_{\partial \Omega}})  = & \tilde \Fl((X\cdot 
	\nu)\nu, (Y\cdot \nu)\nu) - 
	\Fg(\nabla^\tau(Y\cdot \nu)\cdot X_\tau) - \Fg(\nabla^\tau(X\cdot \nu)\cdot Y_\tau) - \Fg(\partial^\tau X_\tau Y_\tau\cdot \nu )
	\end{split}
	\een
	Finally setting $\Fl(v,w):= \tilde \Fl(v\nu, w\nu) 
	$ we recover formula \eqref{eq:struc_second_two}. 
\end{proof}

\begin{remark}
	Notice that the second part of formula \eqref{eq:struc_second_two} can be rewritten by noting that
	\ben
	\begin{split}
	\partial^\tau X_\tau Y_\tau\cdot \nu & =  X_\tau \cdot \partial^\tau\nu Y_\tau\\
	\nabla^\tau(X\cdot \nu)\cdot Y_\tau & =  \nu \cdot \partial^\tau X Y_\tau +  X  \cdot \partial^\tau \nu Y_\tau = \partial^\tau X \nu \cdot Y_\tau.
	\end{split}
	\een
	Substituting this into \eqref{eq:struc_second_two} we obtain
	\ben
	\mathfrak D^2J(\Omega)(X)(Y) =  \Fl(X_{|_{\partial \Omega}}, Y_{|_{\partial 
			\Omega}}) - \Fg( \nu \cdot \partial^\tau Y X_\tau) - \Fg(\nu\cdot \partial^\tau X Y_\tau ) -  \Fg( Y_\tau \cdot \partial^\tau \nu X_\tau) 
	\een
	which is precisely equation (2.7) in \cite{MR1930612}. 	The function $\partial^\tau \nu : T(\partial \Omega) \to T(\partial \Omega)$ is sometimes called shape operator or Weingarden map. 
\end{remark}

\subsection{Boundary and domain Hessian}\label{sec:first_second}
\paragraph{Definition the shape Hessians}
 We now define a two shape Hessians.
\begin{definition}
	Let $\Omega\in \mathcal A_\omega$ be given and assume that $J$ is twice differentiable at $\Omega$.  The domain shape Hessian $H_{\Omega,J}^{\text{vol}}:\ac C^1(\overbar\Dsf,\VR^d)\times \ac C^1(\overbar\Dsf,\VR^d)\rightarrow \VR$  of $J$ at $\Omega$ is defined by
	\ben
	H_{\Omega,J}^{\text{vol}}(X)(Y) := \mathfrak D^2J(\Omega)(X)(Y) \quad (= D^2J(\Omega)(X)(Y)- DJ(\Omega)(\partial XY)).
	\een
Let $\tilde \nu\in \ac C^1(\overbar \Dsf,\VR^d)$ be a $C^1$-extension of the outward pointing unit normal field $\nu$ along $\partial \Omega$ and set $X_\nu := (X\cdot \tilde \nu)\tilde \nu$. 	The \textit{boundary shape Hessian} $H_{\Omega,J}^{\text{bry}}:\ac C^1(\overbar\Dsf,\VR^d)\times \ac C^1(\overbar\Dsf,\VR^d)\rightarrow \VR$  at $\Omega$ is defined by 
		\ben\label{def:hess_quotient}
		H_{\Omega,J}^{\text{bry}}(X)(Y) :=	H_{\Omega,J}^{\text{vol}}(X_\nu)(Y_\nu),
		\een
		for all $X,Y\in \ac C^2(\overbar\Dsf,\VR^d)$.
\end{definition}

\begin{remark}
	\begin{itemize}
		\item[(i)]  Within our framework both Hessians are \textit{symmetric}.  For functions $J$ defined on $\Cf$ the domain Hessian corresponds to the Hessian on 
		the manifold $\Cf$. It only depends on the Euclidean connection $(X,Y)\mapsto \partial XY$. 
		The canonical Hessian on the quotient $\Cf/\Cg_\omega$ is the boundary Hessian, which only depends on the Euclidean connection. 
		\item[(ii)] Our approach is based on the metric group $\Cf$ associated with the vector space $C^1(\overbar \VR^d,\VR^d)$. However,  other vector spaces to construct a metric group, e.g. the space of bounded and Lipschitz continuous function $C^{0,1}(\overbar \VR^d,\VR^d)$, are possible. 
		Since the metric group $\Cf$ is contained in an affine space $\Id+\Theta$, where $\Theta$ equals e.g. $C^{0,1}(\overbar \VR^d,\VR^d)$ or $C^1_b(\overbar \VR^d,\VR^d)$, the tangent space of the corresponding metric group $\Cf(\Theta)$ is always $\Theta$; see \cite[Theorem 2.17, p.151]{MR2731611}. 
	\end{itemize}
\end{remark}

The boundary Hessian is defined as the restriction of the domain Hessian to normal perturbations.  Hence we have
$
H_{\Omega,J}^{\text{vol}}(X)(Y)  = H_{\Omega,J}^{\text{bry}}(X)(Y)$ for all $ X,Y $ with $ X_\tau = Y_\tau=0$ on $\partial \Omega$.
Moreover if $J$ satisfies the assumptions of Theorem~\ref{thm:structure_second}, then 
\[
H_{\Omega,J}^{\text{bry}}(X)(Y) = \Fl(X_{|_{\partial \Omega}}\cdot \nu, Y_{|_{\partial \Omega}}\cdot \nu).
\]

\paragraph{Example of boundary and domain shape Hessians}
Let us briefly revisit the shape function $J(\Omega) := \int_{\Omega}\fsf \; dx$, where $\fsf\in C^2(\overbar \Dsf)$ and $\Omega\in \mathcal A_\omega$ is open and bounded. 
In Example~\ref{ex:J_f} we computed the domain shape Hessian of $J$, namely  
\ben\label{eq:hess_volume}
H_{\Omega,J}^{\text{vol}}(X)(Y) =  \int_\Omega T_1(X): \partial Y + T_0(X)\cdot Y \; dx, 
\een
where  $X,Y\in \ac C^1(\overbar\Dsf,\VR^d)$ and
$  T_1(X) := (\fsf\Div(X) + \nabla \fsf \cdot X) I - \partial X^\top\fsf$ and  $T_0(X) = \nabla^2 \fsf X + \Div(X)\nabla \fsf.$ Following the steps of the proof of \cite[Lemma 3.11]{lauraindistributed} we can readily bring \eqref{eq:hess_volume} into the boundary form \eqref{eq:struc_second_two}. 
Since $\mathfrak D^2J(\Omega)(X)(Y) = 0$ for all $X\in \ac C^2(\overbar\Dsf,\VR^d)$ with $\supp(X)\subset \Omega$, we conclude by partial integration $-\Div(T_1(X))+ T_0(X)=0$ 
everywhere in $\Omega$. This in turn shows by partial integration  
$\mathfrak D^2J(\Omega)(X)(Y)   = \int_{\partial \Omega}  T_1(X)\nu \cdot Y\; ds$ for all 
$X\in \ac C^2(\overbar\Dsf,\VR^d)$.	Recall that $\Div_\tau(X)=\Div_\tau(X_\tau) + \kappa X\cdot \nu$, where $\kappa := \Div_\tau(\nu)$ is the mean curvature of $\partial \Omega$. Then by splitting the restrictions of $X,Y$ to $\partial \Omega$ into normal and tangential part and assuming $\partial \Omega$ is of class $C^2$ we check, 
\ben\label{eq:second_example_boundary}
\begin{split}
	\mathfrak D^2J(\Omega)(X)(Y)  = &\int_{\partial \Omega}  T_1(X)\nu \cdot Y\; ds =  \int_{\partial \Omega} (\fsf \Div(X)+\nabla \fsf\cdot X)(Y\cdot \nu)  -\fsf \nu \cdot \partial XY\; ds\\
	 = & \int_{\partial \Omega} (\fsf \kappa +\nabla f\cdot \nu ) (X\cdot \nu)(Y\cdot \nu)\; ds \\
	&+ \int_{\partial\Omega} \fsf \Div_\tau(X_\tau) Y\cdot \nu + \nabla^\tau\fsf \cdot X (Y\cdot \nu) + \fsf \partial X\nu \cdot \nu (Y\cdot \nu)- \fsf \nu \cdot \partial XY\; ds.
\end{split}
\een
 Using the tangential Stokes formula \cite[p.498]{DelZol11} we obtain
\ben\label{eq:stokes}
\int_{\partial \Omega} \fsf  (Y\cdot \nu)\Div_\tau(X_\tau) \; ds = - \int_{\partial \Omega} X_\tau \cdot \nabla^\tau\fsf (Y\cdot \nu) + \fsf X_\tau \cdot\nabla^\tau(Y\cdot \nu)\;ds.
\een
Further by splitting $X,Y$ into normal and tangential part,
\ben\label{eq:stokes2}
 \nu \cdot \partial XY = \nu\cdot \partial^\tau XY_\tau +  \partial X\nu\cdot \nu (Y\cdot \nu) = \nu\cdot \partial^\tau X_\tau Y_\tau + \nu\cdot \partial^\tau \nu Y_\tau (X\cdot \nu) +  \nabla^\tau (X\cdot \nu)\cdot Y_\tau +  \partial X\nu\cdot \nu (Y\cdot \nu).
\een
 Plugging \eqref{eq:stokes} and \eqref{eq:stokes2} into \eqref{eq:second_example_boundary} and using $\nu\cdot \partial^\tau \nu Y_\tau=0$  we obtain
\ben\label{eq:second_example_boundary2}
\begin{split}
	\mathfrak D^2J(\Omega)(X)(Y)  = &\int_{\partial \Omega} (\fsf \kappa +\nabla \fsf \cdot \nu ) (X\cdot \nu)(Y\cdot \nu)\; ds - \int_{\partial \Omega} \fsf\nu\cdot \partial^\tau X_\tau Y_\tau\;ds\\
	& -\int_{\partial \Omega}\fsf (X_\tau \cdot\nabla^\tau(Y\cdot \nu)+ Y_\tau \cdot\nabla^\tau(X\cdot \nu))\;ds.
\end{split}
\een
Notice that  \eqref{eq:second_example_boundary} has the predicted form \eqref{eq:struc_second_two}. From \eqref{eq:second_example_boundary} we also see that the boundary shape Hessian is given by
\ben\label{eq:hess_bnd}
H_{\Omega,J}^{\text{bry}}(X)(Y)= \int_{\partial \Omega}  (\nabla \fsf\cdot \nu + \kappa\fsf) (X\cdot \nu)(Y\cdot \nu)\; ds, \quad \text{ for } X,Y\in \ac C^2(\overbar\Dsf,\VR^d).
\een

\begin{remark}[Positive definiteness]\label{rem:pos_def}
	As a conclusion of the previous example we see that the boundary Hessian of $J$ will be positive 
	definite if $\nabla \fsf \cdot \nu +\fsf\kappa>\eps$ on $\partial \Omega$ for some 
	constant $\eps >0$. Then
	$
	H_{\Omega,J}^{bry}(X)(X)\ge \eps \|X\cdot \nu\|^2_{L_2(\partial \Omega)}
	$ for all $X\in \ac C^1(\overbar \Dsf,\VR^d)$. However, the boundary Hessian $H^{\text{bry}}_{\Omega,J}$ does not need to be positive definite in a stationary 
	point $\Omega^*$. 	Indeed consider $\fsf\in C^3(\VR)$ given by
	\ben
	\fsf(x) := \left\{\begin{array}{cc}
		(x-1)^4    & \text{ if } x >1, \\
		0 & \text{ if } x\in [-1,1],\\
		-(x+1)^4       & \text{ if } x<1
	\end{array} \right..
	\een
	Then $\Omega^* =(-1,1)$  is a stationary point of $J(\Omega) = \int_{\Omega} \fsf \; dx$, but also the second derivative vanishes at $\Omega^*$. 
\end{remark}

\subsection{Newton's equation on $\Cf$ and $\Cf/\Cg_\omega$}
Let $J$ be a twice differentiable shape function on $\mathcal A_\omega$ and take any $\Omega\in \mathcal A_\omega$. 
\begin{definition}
We call $g^{vol} \in C^1(\partial \Omega,\VR^d)$ a domain Newton direction at $\Omega$ if 
$g^{vol} = g|_{\partial \Omega}$ and   $g\in \ac C^1(\overbar \Dsf,\VR^d)$, solves
\ben\label{eq:domain_newton}
H_{\Omega,J}^{\text{vol}}(g)(Y) = - DJ(\Omega)(Y) \quad \text{ for all } Y\in \ac C^1(\overbar \Dsf,\VR^d).
\een	
We call $g^{bry}\in \ac C^1(\partial \Omega,\VR^d)$ a boundary Newton direction at $\Omega$ if
\ben\label{eq:boundary_newton}
H_{\Omega,J}^{\text{bry}}(g^{bry})(Y) = - DJ(\Omega)(Y) \quad \text{ for all } Y\in \ac C^1(\overbar \Dsf,\VR^d).
\een
\end{definition}

The task of the next section is to construct a finite dimensional subspace of $\ac C^1(\overbar \Dsf,\VR^d)$ on which \eqref{eq:domain_newton} can be solved. 

\begin{example}\label{ex:boundary_domain_hessian}
 We already computed  the boundary Hessian \eqref{eq:hess_bnd} and the domain Hessian \eqref{eq:hess_volume} for the shape functional $J(\Omega) = \int_{\Omega} \fsf \; dx$ at $\Omega\in \mathcal A_\omega$. It is readily seen that the restriction of a volume Newton direction for this example is of the form $g^{vol} = f/(\nabla f\cdot\nu + f\kappa)\nu +  Z_\tau$, $Z\in C^1(\partial \Omega,\VR^d)$. The boundary Newton direction is given by $g^{bry} = (f/(\nabla f\cdot\nu + f\kappa)){\tiny }\nu$. 
\end{example}

\section{Approximate normal basis functions}
This section is devoted to the construction of basis functions, called approximate normal functions,  with which we aim to 
discretise the Newton equation \eqref{eq:domain_newton}. The idea is to construct vector fields  that are linearly independent and additionally "normal enough" to domain of interest such that the discrete Hessians can be inverted. 

The main ingredient for our 
construction are symmetric positive definite kernels and more specifically positive definite radial kernels. Positive definite and symmetric kernels generate reproducing kernel Hilbert spaces (RKHS)
which are characterised by the property that the point evaluation is a continuous functional. They allow to work with 
the reproducing kernel instead of the RKHS itself. For instance shape gradients may be computed explicitly as shown in  \cite{eigelsturm16} without solving a boundary value problem

Throughout this section $M\subset \VR^d$ is a $C^1$-submanifold of codimension one and we denote by $\nu_M$ a normal field along $M$. 

\subsection{Reproducing kernel Hilbert spaces}

We begin with the definition of matrix-valued  reproducing  kernels. 
\begin{definition}
	Let $\mathcal X\subset \VR^d$ be an arbitrary set. A function 
	$\Ksf:\mathcal X\times \mathcal X\rightarrow \VR^{d,d}$ is called 
	\emph{matrix-valued reproducing kernel} for the Hilbert space 
	$\mathcal H(\mathcal X,\VR^d)$ of functions $f:\mathcal X\rightarrow \VR^d$, if for all $x\in \mathcal X, a\in \VR^d$ and $f\in \Hrep$,
	\begin{itemize}
		\item[$(a)$] 
		$
		\Ksf(x,\cdot)a\in \Hrep
		$
		\item[$(b)$] 
		$		
		(\Ksf(x,\cdot)a, f)_{\Hrep} = a\cdot f(x).	
		$
	\end{itemize}
	In case $d=1$ we call $\Ksf$ scalar reproducing kernel and in order to 
	 distinguish  the matrix and scalar case we set 
	$\ksf(x,y):=\Ksf(x,y)$ and $\mathcal H(\mathcal X):=\mathcal H(\mathcal X,\VR^1)$. 	
\end{definition}
\begin{remark}
	\begin{itemize}
		\item 
		Notice that in case $d=1$ the items (a) and (b) of the previous definition  read:  for all $x\in \mathcal X$ and $f\in \mathcal H(\mathcal X)$  we have 
			$\ksf(x,\cdot)\in \mathcal H(\mathcal X)$, and 
			$(\ksf(x,\cdot), f(\cdot))_{\mathcal H(\mathcal X)} = f(x).	
			$
		\item Notice that items (a) and (b) together imply that the point evaluation $\delta_x(f):=f(x)$ is a continuous functional on a reproducing kernel Hilbert space.
	\end{itemize}
\end{remark}
The following remark collects a few interesting properties of reproducing 
Hilbert spaces; cf.\cite{MR2131724}.
\begin{remark}\label{rem:kernel}
	\begin{itemize}
		\item It is readily checked that a (scalar) reproducing kernel is symmetric, 
		$\ksf(x,y)=\ksf(y,x)$ for all $x,y\in \mathcal X$.  It is also  positive semi-definite, that is, 
		for all mutually distinct $\{x_1,\ldots, x_N\}$ the matrix $(\ksf(x_i,x_j))$ is 
		positive semidefinite. When this latter matrix is positive definite 
		for all mutually distinct $x_i$ we call $\ksf$ positive definite reproducing 
		kernel. If a kernel $\ksf$ is positive definite then for all mutually distinct points
		$\{x_1,\ldots, x_M\}\subset \mathcal X$,  $M\ge 1$,   the functions $\{\ksf(x_1,\cdot), \ldots \ksf(x_M,\cdot)\}$ are linearly independent.  
		\item Let $\mathcal X=\Omega$, $\Omega\subset \VR^d$ open, and $\ksf(x,\cdot)\in C(\Omega)$ for all $x\in \Omega$. Then we have the inclusion $\mathcal H(\Omega)\subset C(\Omega)$; cf. \cite[pp.133]{MR2131724}. 
		\item When we start with a scalar reproducing kernel $\ksf$ on $\mathcal X\subset \VR^d$ with 
		RKHS $\mathcal H(\mathcal X)$, then $\Ksf(x,y):=\ksf(x,y)I$ is a matrix-valued reproducing kernel with RKHS $[\mathcal H(\mathcal X)]^d$. Moreover, the inner product is given by
		$
		(f,g)_{\mathcal H(\mathcal X,\VR^d)} := (f_1,g_1)_{\mathcal H(\mathcal X)} + \cdots + (f_d,g_d)_{\mathcal H(\mathcal X)}	
		$
		for all $f=(f_1,\ldots, f_d)$ and $g=(g_1,\ldots, g_d)$ with $f_1,\ldots, f_d, g_1,\ldots, g_d\in \mathcal H(\mathcal X)$.  A proof can be found in \cite{eigelsturm16}.
	\end{itemize}
\end{remark}

\begin{example}
	An example of positive definite kernel is the Gaussian 
	kernel $\ksf^\sigma(x,y) := e^{-\frac{|x-y|^2}{\sigma}}$, $\sigma>0$; cf. \cite{MR3044643}. Another important compactly supported radial kernel that is positive definite is $\ksf^\sigma(x,y):= (1-\frac{|x-y|}{\sigma})_+^4(4\frac{|x-y|}{\sigma} +1)$, $\sigma>0$; \cite[pp.119]{MR2131724}.	 
\end{example}

\subsection{Approximate normal basis functions}
We now define special basis function on  $M\subset\VR^d$. These new 
basis functions are vector fields $\VR^d\rightarrow \VR^d$ with the 
pleasing property that their restriction to $M$ is approximately normal in a certain sense (cf. Lemma~\ref{lem:kernel_function_small}). 

\begin{definition}[ Normal and approximate basis functions]\label{def:basis_functions}
	 Let  $\ksf:\VR^d\times \VR^d \rightarrow \VR$ be a positive definite reproducing kernel.
	\begin{itemize}
		\item[(a)]
		  We define the \emph{approximate 
		normal basis function} $\Fv^x=\Fv^x_M:\VR^d\rightarrow \VR^d$ associated with the point $x\in M$ by 
	\ben\label{eq:basis_vi}
	\Fv^x(y) := \nu_M(x) \ksf(x,y).
	\een
	For an arbitrary set $\mathcal X\subset M$ we define	
	%	Since $\VXM$ is a subset of the reproducing kernel Hilbert space $[\mathcal H(\VR^d)]^d$ associated with $\Ksf(x,y)=\ksf(x,y)I$ we may define
	the approximate normal space
	$
	\VXM := \overline{\text{span}\{\Fv^x(\cdot):\; x\in \mathcal X \} },
	$
	where the closure is taken in $[\mathcal H(\VR^d)]^d$, the vvRKHS associated with the scalar kernel $\ksf$. In case
	$\mathcal X=M$ we set $\VMM:=\VXM$. 
	
	\item[(b)]	We define the normal function $\Fw^x=\Fw^x_M:\VR^d\rightarrow \VR^d$ associated with the point $x\in M$ by 
		\ben\label{eq:normal_basis}
		\Fw^x(y) := \nu_M(y) \ksf(x,y)
		\een
	and the normal space,
	 $
	 \NXM := \overline{\text{span}\{\Fw^x(\cdot):\; x\in \mathcal X \} }.
	 $
	 The closure is taken in the vvRKHS $[\mathcal H(M)]^d$ associated with the restriction of $\ksf$ to $M$.
	\end{itemize}
\end{definition}
Whenever no confusion is possible we simply write $\Fv^x$ (resp. $\Fw^x$) instead of $\Fv^x_{M}$ (resp. $\Fw^x_M$). 
Notice that we have the inclusion $\VMM \subset C(\VR^d,\VR^d)$ as 
$[\mathcal H(\VR^d)]^d \subset C(\VR^d,\VR^d)$; cf. Remark~\ref{rem:kernel}.

\begin{figure}
	\centering
	
	\begin{tikzpicture}[allow upside down, scale=1.5]
	
	\draw [black,line width=1pt,smooth, tension=1.9,samples=1000,rounded 
	corners=15pt] (2.86,6.4) -- node[sloped,inner sep=0cm,above,pos=.5, 
	anchor=south west, minimum height=1cm,minimum width=0.5cm](N){} 
	(3.62,7.5) -- (6.42,8.4) -- (8,7.5)-- (8.4,6.4); 
	\node (solution) at ($(5,7)$) {$\Omega$};
	\node (solution) at ($(2.8,8)$) {$\Dsf\setminus \Omega$};
	
	[]
	\centerarc[pattern=north west lines, pattern color = red!50!black,thick](5.85,8.20)(0:360:0.2);

	\path[every node/.style={font=\sffamily\small}]
	(5.9,8.25) edge[ <-] node [left] {} (6.6,9.25);
	\node (solution) at ($(6.5,9.45)$) {{\small point $x$ }};
	
	\fill[red!50!black] (5.85,8.20) circle (1pt);
	
	\end{tikzpicture}
	\caption{Sketch of a basis function $\Fv^x(y) = \phi_\sigma(|x-y|)\nu_M(x)$ that has 
		support (dashed red) around the point $x
		$.  From the picture it can be observed that $\Fv^x(y)\approx 
		\phi_\sigma(|x-y|)\nu_M(y)$ for all $y$ near $x$ and 
		$\Fv^x(y)=0$ for all $y$ far away from $x$. Here the submanifold is $M=\partial \Omega$. }
	\label{fig:sketch_basis}
\end{figure}
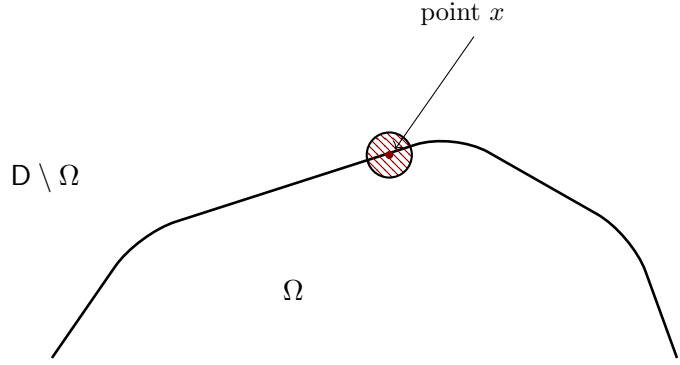

\subsection{Inner products on approximate normal spaces}
In this subsection let $M$, $\ksf$ and $\VMM$ be defined as 
in Definition~\ref{def:basis_functions}.

\begin{lemma}\label{lem:linearly_independent}
	The vector fields   
	$\{\Fv^{x_1},\ldots, \Fv^{x_N}\} $ defined in \eqref{eq:basis_vi} are linearly 
	independent if and only if  $\{x_1,\ldots, x_N\}\subset \mathcal X$ are pairwise 
	distinct. 
\end{lemma}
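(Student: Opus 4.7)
The plan is to handle the two implications separately, with the harder direction being the ``if'' direction that exploits the positive definiteness of the kernel $\kk$.

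For the ``only if'' direction I would argue by contraposition: if two of the points coincide, say $x_i = x_j$ with $i\neq j$, then by the very definition $v^{x_i}(y) = \nu(x_i)\kk(x_i,y) = \nu(x_j)\kk(x_j,y) = v^{x_j}(y)$ for every $y\in\R^d$, so the system $\{v^{x_1},\ldots,v^{x_N}\}$ contains a repeated element and is linearly dependent.

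For the ``if'' direction, assume the points $x_1,\ldots,x_N$ are pairwise distinct and suppose $\sum_{l=1}^N \alpha_l v^{x_l} = 0$ in $\VhXM$, i.e.
\[
\sum_{l=1}^N \alpha_l\,\nu(x_l)\,\kk(x_l,y) \;=\; 0 \quad \text{in } \R^d \text{ for all } y\in\R^d.
\]
Reading this component-wise, for each fixed $j\in\{1,\ldots,d\}$ we obtain
\[
\sum_{l=1}^N \bigl(\alpha_l\,\nu_j(x_l)\bigr)\,\kk(x_l,y) \;=\; 0 \quad \text{for all } y\in \R^d.
\]
Since $\kk$ is a positive definite reproducing kernel, the linear independence property recalled in Remark~\ref{rem:kernel} applies to the pairwise distinct points $x_1,\ldots,x_N$ and forces $\alpha_l\,\nu_j(x_l) = 0$ for every $l$ and every $j$. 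Because $\nu(x_l)$ is a unit normal vector, $|\nu(x_l)|=1$, so for each $l$ there exists at least one index $j$ with $\nu_j(x_l)\neq 0$; this yields $\alpha_l = 0$ for all $l$, proving linear independence.

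The only subtlety is the vector-valued nature of the coefficients: because $v^{x_l}$ carries the factor $\nu(x_l)\in\R^d$, the scalar linear-independence property of a positive definite kernel cannot be applied directly to $\sum \alpha_l \kk(x_l,\cdot)$, and one must pass to components. Once this is observed the argument is immediate; no smoothness or boundedness of $\kk$ beyond positive definiteness is needed for this lemma.
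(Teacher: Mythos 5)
Your proof is correct and follows essentially the same route as the paper's: reduce to the scalar linear independence of $\{\kk(x_1,\cdot),\ldots,\kk(x_N,\cdot)\}$ for a positive definite kernel, conclude $\alpha_l\,\nu(x_l)=0$, and use $|\nu(x_l)|=1$ to force $\alpha_l=0$. You are in fact slightly more careful than the paper, which passes from the vector-valued identity to the scalar kernel independence without spelling out the component-wise reduction, and which leaves the trivial ``only if'' direction implicit.
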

\begin{proof}
	Let $\alpha_1,\ldots, \alpha_N\in \VR$ be such that $
	\sum_{i=1}^N \alpha_i \Fv^i(x) = 0  
	$ for all $x\in \VR^d$.
	Since $\{\ksf(x_1,x), \ldots, \ksf(x_N,x)) \}$ are 
	linearly independent on $\VR^d$, we obtain
	$\alpha_1 \nu_M(x_1) = \cdots =\alpha_N \nu_M(x_N)=0$. But at each point $x_i$ one component of $\nu_M(x_i)$ must be non-zero since 
	 $|\nu_M(x_i)|=1$  and hence we conclude $\alpha_i=0$ for $i=1,\ldots, N$.  
\end{proof}

Next we compute the orthogonal complement of  
$\VXM$ in $[\mathcal H(\VR^d)]^d$. 

\begin{lemma}\label{lem:tangential_Vn}
	We have for arbitrary subset $\mathcal X\subset M$,
	\ben\label{eq:orth_comp}
	\VXM^\bot = \{f\in [\mathcal H(\VR^d)]^d :\; f(x)\cdot \nu_M(x)=0 
	\quad \text{ for all } x\in \mathcal X\}.
	\een
\end{lemma}
\begin{proof}
Let use denote by $\nu_M^\ell$ the components of the vector field $\nu_M$. 	We have for every $f=(f_1,\ldots, f_d)\in [\mathcal H(\VR^d)]^d$ and $x\in \mathcal X$,
	\ben\label{eq:orth_v}
	(f, \Fv^x)_{[\mathcal H]^d} = \sum_{\ell=1}^d (f_\ell, 
	\nu_M^\ell(x)\ksf(x,\cdot ))_{\mathcal H} = \sum_{\ell=1}^d \nu_M^\ell(x) f_\ell(x) = \nu_M(x)\cdot f(x),
	\een
	where in the penultimate step we used the reproducing property of 
	$\ksf$. 
	
	Let us now show the inclusion $\subset$ in \eqref{eq:orth_comp}. Let $f\in \VXM^\bot$ be arbitrary. Then in view of \eqref{eq:orth_v}  we get
	$0=(f, \Fv^x)_{\mathcal H} = \nu_M(x)\cdot f(x)$ for all $x\in \mathcal X$,  so that $f\in \{f\in [\mathcal H(\VR^d)]^d :\; f(x)\cdot \nu_M(x)=0 
	\quad \text{ for all } x\in \mathcal X\}$. 
	It remains to prove $\supset$. Let $f\in[\mathcal H(\VR^d)]^d$ be such that  $f(x)\cdot \nu_M(x)=0
	$ for all $x\in \mathcal X$. Then again in view of \eqref{eq:orth_v} for all $x\in \mathcal X$,
	$
	(f, \Fv^x)_{[\mathcal H]^d} = f(x)\cdot \nu_M(x)=0.
	$
	By linearity and density we conclude
	$(f, \Fv)_{[\mathcal H]^d} =0$ for all $\Fv\in\VXM$ which shows $f\in \VXM^\bot$ and 
	finishes the proof. 
\end{proof}

The previous lemma tells us that $f(x)\cdot \nu_M(x)=0$ for 
all $f\in \VXM^\bot$ and all $x\in \mathcal X$.  
However it is not true that $f(x)\cdot \nu_M(x)=0$ 
for all $x\in M$.
But the (possibly uncountable) number of tangential points of $f\in \VXM^\bot$ at 
$M$ increases with the dimension of 
$\VXM$.

\begin{lemma}\label{lem:tangential_Vn2}
	Let $X\in \mathcal V^M(\VR^d,\VR^d)$ be such that $X\cdot \nu_M =0$ on $M$. Then $X=0$ on $\VR^d$.	
\end{lemma}
\begin{proof}
	If $X\cdot \nu_M=0$ on $M$, then Lemma~\ref{lem:tangential_Vn} shows  $X\in \VMM^\bot$. 
	Since  $[\mathcal H(\VR^d)]^d = \VMM^\bot \oplus \VMM$ we must have $X=0$. 	
\end{proof}
\begin{remark}
	Assume that $M$ is compact. 	
	Then the functions
	\begin{align}
		(X,Y)_{L_2,M} & := \int_{M} (X\cdot \nu_M) (Y\cdot \nu_M)\; ds,	\label{eq:inner} \\
		(X,Y)_{H^1,M} &:= \int_{M} \nabla^\tau(X\cdot \nu_M)\cdot  \nabla^\tau(Y\cdot \nu_M) + (X\cdot \nu_M) (Y\cdot \nu_M)\; ds \label{eq:inner3}
		\end{align}
	define inner products on $\VMM$. Hence $\VMM$ equipped with \eqref{eq:inner} or \eqref{eq:inner3} is a pre-Hilbert space. 
	\begin{proof}
	It is also clear that the functions defined in \eqref{eq:inner} and \eqref{eq:inner3} are bilinear and non-negative. 
	It remains to check that $(X,X)=0$ if and only if $X=0$. 
	If $X=0$, then it is obvious that $(X,X)=0$. Since $\|X\|_{L_2,M}\le \|X\|_{H^1,M}$ we only need to show the converge statement for $(\cdot,\cdot)_{L_2,M}$. For $X\in \VMM$ the equality  $(X,X)_{L_2,M}=0$ is equivalent to  $X\cdot \nu_M=0$ on $M$.  Hence  Lemma~\ref{lem:tangential_Vn2} implies $X=0$ on $\VR^d$. 
\end{proof}
\end{remark}
The previous remark shows that $(\VMM, (\cdot, \cdot))$ is a pre-Hilbert space with $(\cdot, \cdot)$ given by \eqref{eq:inner} or \eqref{eq:inner3}, which is not necessarily complete.  Finally let us mention \cite{MR1998617} and also \cite{doi:10.1137/15M1029369} for the discussion of other interesting  metrics including $H^{-1/2}$ and $H^{1/2}$ Sobolev-type metrics.

\subsection{Basis function of radial kernels}\label{subsec:basis_radial}
Let us now examine how "normal" the fields in $\VMM$ actually are. Recall that for a function $f:M\rightarrow \VR^d$ the tangential part is defined by $f_\tau := f- (f\cdot \nu_M)\nu_M$.  Throughout the rest of the paper we assume:
\begin{assumption}\label{ass:phi}
	Let  $\phi \in C^2([0,\infty])$ and $\supp\; \phi \subset [0,1]$.
\end{assumption}  

For $\sigma>0$ we associated with $\phi$ the radial kernel $\ksf^\sigma(x,y) = \phi(|x-y|/\sigma)$, $x,y\in \VR^d$ and the function $\hat \phi(x):=\phi(|x|)$, $x\in \VR^d$. We readily check that there are constants $c_1,c_2>0$, so that for all $\sigma>0$, $
|\ksf^\sigma(x,y)|\le c_1$ and also $|\nabla_y \ksf^\sigma(x,y)| \le \frac{c_2}{\sigma}$ for all $x,y\in \VR^d.$

An example of a positive definite function $\phi$ in $\VR^2$ satisfying Assumption~\ref{ass:phi} is given by $\phi(r) = c(1+r)^4_+(4r+1)$ for some positive constant $c$; see \cite[pp.129]{MR2131724} and also \cite{MR1664547,MR1357720,MR1366510}. For these radial
kernels 
it is possible to explicitly determine their native space, i.e., the
Hilbert space they generate.

\begin{lemma}\label{lem:kernel_function_small}
	Assume that	$M$ is compact and let $\phi$ satisfy Assumption~\ref{ass:phi}. 
	Set $\ksf^\sigma(x,y) = \phi(|x-y|/\sigma)$ and $\Fv^x_\sigma(y):=\ksf^\sigma(x,y)\nu_M(x)$.
	For  every $x\in M$, we have
	\ben\label{eq:est_one}
	\lim_{\sigma \searrow 0} \|(\Fv^x_\sigma)_\tau\|_{C(M,\VR^d)} = 0.
	\een
	If $M $ is of class $C^2$, then there are constants $c_1,c_2>0$, so that for $x\in M$,
	$
	\|\nabla^\tau(\Fv^x_\sigma\cdot \nu_M)\|_{C(M,\VR^d)} \le c_1 + c_2/\sigma$ for all $\sigma >0$.
\end{lemma}
\begin{proof}
	Since $\nu_M$ is continuous on $M$ and $|\nu_M|=1$ on $M$, we find for every $x\in M$ and every 
	$\eps >0$ a number 
	$\delta >0$ so that $|\nu_M(x)-\nu_M(y)|<\eps$ and $|1-\nu(x)\cdot \nu_M(y)|<\eps$ for all $y\in  M$ 
	with $|x-y|<\delta.$  Define $L:=\max_{r\in \VR}|\phi(r)|$, then 
	$|\ksf^\sigma(x,y)|\le L$ for all $x,y\in \VR^d$ and all $\sigma >0$.  
	Now for all $y\in M$ with $|x-y|<\delta$ we get the estimate
	\ben\label{eq:estimate_v_x}
	\begin{split}
		|(\Fv^x_\sigma)_\tau(y)|& = 	|\Fv^x_\sigma(y)-(\Fv^x_\sigma(y)\cdot \nu_M(y))\cdot \nu_M(y)|  =
		|\ksf^\sigma(x,y)\nu_M(x) - \nu_M (x)\cdot (\ksf^\sigma(x,y) \nu_M(y))\nu_M(y)|\\
		& \le \underbrace{|\ksf^\sigma(x,y)}_{\le L}| \underbrace{|\nu_M(x)-\nu_M(y)|}_{\le 
			\eps} + 
		\underbrace{|\ksf^\sigma(x,y)|}_{\le 
			 L }\underbrace{|\nu_M(y)|}_{=1}\underbrace{|1-\nu_M(x)\cdot 
			\nu_M(y)|}_{\le \eps} \le  2L  \eps.  
	\end{split}
	\een
	In view of $\supp(\phi)\subset [0,1]$ we have 
	$\Fv^x_\sigma(y)=0$ for all $x,y\in M$ with 
	$|x-y|> \sigma$. As a consequence \eqref{eq:estimate_v_x} is valid for all 
	$y\in M$ when $ \sigma <\delta$ and thus for all  
	$ \sigma <\delta$ we have
	$
	\|(\Fv^x_\sigma)^\tau\|_{C( M ,\VR^d)} \le  2L \eps . 
	$
	This shows that for arbitrary $\eps >0$ we find $\delta >0$ so that 
	$\|(\Fv^x_\sigma)^\tau\|_{C( M ,\VR^d)} \le  2L \eps$   for all 
	$ \sigma <\delta$ which shows \eqref{eq:est_one}.
	
	Let $M$ be of class $C^2$.  By assumption  $|\nabla_y \ksf^\sigma(x,y)| \le \frac{c}{\sigma}$
	for all $x, y$ and $\sigma>0$. Therefore 
	 $
	\nabla^\tau(\Fv^x_\sigma\cdot \nu_M) = (\partial^\tau \Fv^x_\sigma)^\top\nu_M + (\partial^\tau\nu_M)^\top \Fv^x_\sigma =  (\nu_M(x)\otimes \nabla_y^\tau\ksf^\sigma(x,y)))^\top \nu_M + (\partial^\tau\nu_M)^\top \Fv^x_\sigma
	$   
	and this shows $\nabla^\tau(\Fv^x_\sigma\cdot \nu_M)$ is bounded by $c_1 + c_2/\sigma$ and finishes the prove.
\end{proof}

\subsection{Transport of approximate normal basis functions}

In the following we set $\hat \phi (x) := \phi(|x|/\sigma)$ for a fixed positive number $\sigma$, where $\phi$ satisfies Assumption~\ref{ass:phi}. Recall that $B_\delta(0)$ denotes the open ball in $C^1(\overbar\VR^d,\VR^d)$ centered at the origin of radius $\delta>0$. Let $q\in (0,1)$. 
Set $T_s^g := \Id+sg$ for $s\in [0,1]$ and $g\in B_q(0)$.  
Given  $\hat \phi\in C(\VR^d)$, we define for all $x\in M$,  $s\in(0,1)$,
\ben\label{eq:transport_2} 
\mathfrak T^{s,g}_M:\hat \phi(x-\cdot)\nu_M(x)\mapsto \hat \phi(x^s-\cdot) \nu_{T^{g,s}(M)}(x^s),
\een
where $x^s := T^{g,s}(x)$. In other words the transport $\mathfrak T^{g,s}$ maps the approximate normal function associated with the point $x$ at $M$ to the approximate normal function associated with the point $x^s$ at $T^{s,g}(M)$. In view of $\nu_{T^{g,s}(M)} = \nu^{g,s}_M \circ (T^{g,s})^{-1}$ with $\nu^{g,s}_M(x) := \frac{(\partial T^{g,s}(x))^{-\top}\nu_M(x)}{|(\partial T^{g,s}(x))^{-\top}\nu_M(x)|}$ the transport reads
\ben
 \mathfrak T^{s,g}_M(\hat \phi(x-\cdot)\nu_M(x)) = \hat \phi(T^{g,s}(x)-\cdot) \nu^{g,s}_M(x).
\een

\begin{lemma}\label{lem:transport}
 Let $q\in (0,1)$ and $x\in M$ be given.
Set $\Fv_{M,s}^{g,x} := \mathfrak T^{g,s}(\hat\phi(x-\cdot)\nu_M(x))$, $g\in B_q(0)$. Then $s\mapsto \Fv_{M,s}^{g,x}:[0,1]\rightarrow C^1(\overbar \VR^d,\VR^d)$ is differentiable and its derivative $\dot \Fv^{g,x}_{M,s}(y):= \frac{d}{ds} \Fv^{g,x}_{M,s}(y)$ is given by 
\ben\label{eq:formula_dot_v_s}
 	\begin{split}
 		\dot \Fv^{g,x}_{M,s}(y)  = \big(\nu^{g,s}_M(x)\otimes \nabla \hat \phi\left(T^{g,s}(x)-y\right)\big) g(x)   + \hat \phi\left(T^{g,s}(x)-y\right) \dot \nu^s_M(x),
 	\end{split}
 	\een
 	where $\dot \nu^{g,s}_M = - (\partial T^{g,s})^{-\top}\partial g^{\top} \nu^{g,s}_M + \nu^{g,s}_M (\nu^{g,s}_M\cdot (\partial T^{g,s})^{-\top}\partial g^{\top }\nu^{g,s}_M)$.
 	There is a constant $c>0$, independent of $x$, such that for all $g\in B_q(0)$ and all $s\in [0,1]$
 	\ben\label{eq:estimate_dot_v}
 	\|\dot \Fv_{M,s}^{g,x}\|_{C^1} \le c\|g\|_{C^1}.
 	\een
 	Moreover, we have
 	\ben
 	\partial \Fv_{M,s}^{g,x}(y) g(y) = - \bigg[\nu^{g,s}_M(x)\otimes \nabla \hat \phi\left(T^{g,s}(x)-y\right)\bigg]g(y).
 	\een
\end{lemma}
\begin{proof}
	
The function $f(s,x) := \Fv_{M,s}^{g,x}$ is of class $C^2$ since $\nu^s_M$ and $\hat \phi$ are of class $C^2$. It follows that $s\mapsto \Fv_{M,s}^{g,x}, [0,1] \rightarrow C^1(\overbar \VR^d,\VR^d)$ is differentiable.  Formula \eqref{eq:formula_dot_v_s} follows by direct computation and this shows \eqref{eq:estimate_dot_v}.
\end{proof}

\begin{corollary}\label{corr:esitmate_kernel_parallel_transport}
Let the  hypotheses of the previous lemma be satisfied. Let  $q\in (0,1)$ be given. Then there is a constant $c>0$ so that for all  $x\in M$ and  $g\in B_q(0)$, 
\ben\label{eq:lip_v_x}
\|\Fv^x_M-\Fv^{x+g(x)}_{(\Id+g)(M)}\|_{C^1}\le c\|g\|_{C^1}.
\een	
\end{corollary}
\begin{proof}
	Estimate \eqref{eq:lip_v_x} follows directly from the fundamental theorem of calculus applied to $s\mapsto \Fv_{M,s}^{x,g}$ and \eqref{eq:estimate_dot_v}.
\end{proof}

\section{Newton's method for shape functions $J(\Omega)$}
This section is devoted to the convergence analysis of  a Newton algorithm 
in the spirit of \cite{MR2893875}. The Newton equation 
will be solved in the approximate normal space using the basis functions introduced in the previous section. 
We  prove  the convergence of Newton's method in the discrete setting, however, an analog in the finite dimensional setting should also hold under suitable conditions. We work with the domain shape Hessian $H^{\text{vol}}_{\Omega,J}=\mathfrak D^2J(\Omega)$ restricted
to a finite dimensional subspace of $\VOmega$ which is an approximation of the boundary shape Hessian $H^{\text{bry}}_{\Omega,J}$.

\subsection{Setting and algorithm}
Let a bounded $C^1$ domain $\omega_0\subset \VR^d$, a finite 
number of points $\mathcal X=\{x_1,\ldots,x_n \}$ a finite number
of points contained in $\partial \omega_0$, and a twice
differentiable shape function $J$ on $\mathcal A_{\omega_0}$ be given. 

Our Newton method reads: find $g_k \in \text{span}\{\Fv^1_k,\ldots, \Fv^n_k \}$ such that
\ben\label{eq:newton_equation}
H_{F_k(\omega_0),J}^{vol}(g_k)(\varphi) = - DJ(F_k(\omega_0))(\varphi)\quad \text{ 
	for all } \varphi\in \text{span}\{\Fv^1_k,\ldots, \Fv^n_k \}. 
\een
We set $F_0:=\Id$ and update $F_k$ by setting $F_{k+1} := T_k^1 \circ F_k$, where $T_k^s :=\Id + sg_k$, $s\in[0,1]$. The basis functions $\Fv_k^i, i=1,\ldots, n$ are given by $\Fv_k^i(y):=\Fv_{k,0}^i(y)$, where
\[
\Fv^i_{k,s} := \hat \phi(T_k^s(x_k^i)-\cdot)\nu_k^s(x_k^i), \quad \Omega_k^s := T^s_k(\Omega_k), \quad \nu_k^s := \frac{(\partial T_k^s)^{-\top}\nu_k}{|(\partial T_k^s)^{-\top}\nu_k|}, \quad x_k^{i,s} := T_k^s(x_k^i).
\]
 We also set $\Fv^{i,s}_k := \Fv^i_{k,s}\circ T_k^s$ and $\dot\Fv^{i,s}_k = \frac{d}{ds}\Fv^{i,s}_k$. By the chain rule,
\ben\label{eq:material_derivative}
(\Fv^i_{k,s})':= \frac{d}{ds} (\Fv^{i,s}_k\circ (T_k^s)^{-1}) = ( \dot\Fv^{i,s}_k)\circ (T_k^s)^{-1} - (\partial \Fv^{i,s}_k(\partial T_k^s)^{-1}g_k)\circ (T_k^s)^{-1}
\een
since $ \frac{d}{ds} (T_k^s)^{-1} = -((\partial T_k^s)^{-1}g_k)\circ (T_k^s)^{-1}$.

It is convenient to write \eqref{eq:newton_equation} in matrix form. 
For this purpose set $\Omega_k :=F_k(\omega_0)$ and introduce the following notation for the discrete domain Hessian and first derivative,
\ben\label{eq:newton_discrete}
 H_k   := (H_{\Omega_k,J}^{vol}(\Fv_k^i)(\Fv_k^j))_{i,j=1,\ldots, n} , \quad  l_k   := (DJ(F_k(\Omega))(\Fv_k^i))_{i=1,\ldots, n}.
\een
Further we set  
$\mathcal X_k = \{F_k(x_1),\ldots, F_k(x_n)\}$.  At the $k$th iteration we  identify the Euclidean space $\VR^n$ with  
$\text{span}\{\Fv_k^1,\ldots, \Fv_k^n\}$ via
$
 P_k: \; (y_1,\ldots, y_n)\mapsto \sum_{\ell=1}^n y_\ell \Fv^\ell_k.
$
It satisfies  $\|g\|_{C^1} = \|P_k(X)\|_{C^1}\le \|P_k\| |X|$ for all $X\in \VR^n$.
Now we can write \eqref{eq:newton_equation} in matrix notation as follows method
\ben\label{eq:newton_discrete2}
H_k X_k = -l_k, \quad F_{k+1} = (\Id+g_k) \circ F_k, \quad g_k= P_k(X_k).
\een

We consider the following algorithm.

\begin{algorithm}[H]
	
	%  \SetLine % For v3.9
	
	%\SetAlgoLined % For previous releases [?]
	
	\KwData{Let  $\gamma>0$ and $n, N\in \VN$ be given. Choose  
		$\Omega\subset \VR^d$ and 
		$\mathcal X_0 := \{x_1,\ldots, x_n\}\subset \Omega$. Let 
		$F_0:=\Id$. }

	initialization\;
	
	\While{ $k  \le  N $}{
		
		1.) Compute $X_k\in \VR^n$ as solution of
		$
		H_{\mathcal X_k}(F_k)X_k = -L_{\mathcal X_k}(F_k).
		$
		Set $g_k := P_{\mathcal X_k}(X_k)$.
		
		2.) Update $F_{k+1} \gets (\Id+g_k)\circ F_k$.

		3.)     Update $\mathcal X_{k+1} \gets \{F_{k+1}(x_1),\ldots, F_{k+1}(x_n)\}$.
		
		4.)         Update $\Omega_{k+1} \gets F_{k+1}(\Omega_k)$.
		
		5.)         Update $v^{F_{k+1}(x_i)}_{\partial\Omega_{k+1}} \gets  v^{F_k(x_i)}_{\partial\Omega_k}  $.	
		
		\eIf{ $J(F_k(\Omega)) - J(F_{k+1}(\Omega)) \ge \gamma (J(\Omega) - J(F_1(\Omega)))$ }{            
			step accepted: continue program\; 
		}{
		no sufficient decrease: quit\;
	}            
	increase $k \gets k+1$\;
}
\caption{Newton algorithm} \label{eq:algo_newton}
\end{algorithm}

\subsection{Convergence analysis of Newton's method for shape functions $J(\Omega)$}

Subsequently we need the following auxiliary result. 
\begin{lemma}\label{lem:contraction}
Let $(a_k)$ be a sequence of nonnegative numbers. Let $c>0$ be a constant and let $q_2,q_1\in (0,1)$ be two 
numbers satisfying $\bar q:=q_1+q_2 <1$. Assume
\ben\label{eq:ak}
a_{k+1} \le c a_k^2 + q_2a_k \quad \text{ for all }  k\ge 0.
\een	
If the initial number $a_0$ is such that $ca_0<q_1$, then
\ben
a_{k+1} \le  \bar q^{k+1} a_0 \quad \text{ for all } k\ge 0
\een
and consequently $(a_k)$ goes to zero.
\end{lemma}
\begin{proof}
The proof follows easily  by induction over $k$.
\end{proof}

Now we are in a position to show show that $g_k$ converges to 
zero in $C^1(\overbar\VR^d,\VR^d)$ and $(F_k)$ converges to some element $F_*$ in $\mathcal \Cf$. With the setting and notation from the previous paragraph we now prove the following theorem. 

\begin{theorem}\label{thm:main_Newton}
Let $J$, $\omega_0$, $F_k$ and $g_k$ as before. 	Assume there is $q\in (0,1)$ such that $g_k\in B_q(0)$ for all $k\ge 0$. Moreover, let the following hypothesis be satisfied for all $i=1,\ldots, n$. 
	\begin{itemize}
		\item[(A1)] The matrix $H_k$ is invertible and there is $c>0$, such that
				$
				\|H_k^{-1}\|\le c 
				$
				for $ k\ge 0 $.	
		\item[(A2)] There is a constant $c>0$, such that for all $k\ge 0$  and  $s\in [0,1]$,
		\ben\label{eq:A1}
		|D^2J(T_k^s(\Omega_k))(\Fv_k^{i,s})(g_k\circ (\Id+sg_k)^{-1}) -  DJ^2(T_k^0(\Omega_k))(\Fv_k^{i,0})(g_k)| \le c\|g_k\|_{C^1}^2.
		\een
		\item[(A3)] There is a constant $c>0$, such that for all $k\ge 0$  and  $s\in [0,1]$,
		\ben\label{eq:A2}
		|DJ(T_k^s(\Omega_k))((\partial \Fv_k^{i,s} (\partial T_k^s)^{-1}g_k)\circ (T_k^s)^{-1})-  DJ(\Omega_k)(\partial \Fv_k^{i,0}g_k)| \le c\|g_k\|_{C^1}^2.
		\een

		\item[(A4)] There is a sequence $(p_k)$, $p_k\in [0,\tilde q]$, $\tilde q\in (0,1)$, such that for all $k\ge 0$ and  $s\in [0,1]$,
		\ben\label{eq:A3}
				\sum_{i=1}^n\|H_k^{-1}\|\|P_k\| |DJ(T_k^s(\Omega_k))(\dot \Fv_k^{i,s}\circ (T_k^s)^{-1}) | \le p_k\|g_k\|_{C^1}.
				\een

	\end{itemize}
\text{ }\\[0.1cm]	
 \noindent
	Then there holds:
	\begin{itemize}
		\item[(i)] There is a constant $c>0$, such that the series $\kappa_k :=p_k + c|X_k|$ satisfies
		\ben\label{eq:convergence_rate}
		|X_{k+1}|  \le \kappa_k|X_k| \quad\text{ for all } k\ge 0.
		\een
         If $|X_0|c+\tilde q<1$, then $X_k \to 0$ as $k\to 0$. 
		\item[(ii)] 
		Under the conditions of (i) there is an element $F_*\in \Cf$, such that
		$
		d(F_*,F_k) \rightarrow 0
		$
		as $k\rightarrow \infty$ and we have an estimate
		\ben\label{eq:apriori}
		d(F_*,F_k) \le  5\|g_0\|_{C^1}\frac{\alpha^k}{1-\alpha } \quad\text{ for all } k\ge 0. 
		\een 
		Moreover if $
		F\mapsto \mathfrak D^2J(F(\Omega)): \Cf\rightarrow \mathcal L(C^1(\overbar\VR^d,\VR^d),\mathcal 
		L(C^1(\overbar\VR^d,\VR^d),\VR)) 
		$
		and $
		F\mapsto DJ(F(\Omega)): \Cf \rightarrow \mathcal L(C^1(\overbar\VR^d,\VR^d),\VR)
		$
		are continuous at $F_*\in \Cf$, then
		$DJ(F_*(\Omega))(\Fv_*^i)=0$ for $i=1,\ldots, n$, where $\Fv_*^i$ denotes the approximate normal function associated with $F_*(\Omega)$  and the point $F_*(x_i)$. If $DJ(F_*(\Omega))(\varphi)=0$ for all $\varphi \in C^1(\overbar \VR^d,\VR^d)$, then $p_k\to 0$ as $k\to \infty$ and hence $\kappa_k\to 0$ as $k\to \infty$. In this case the sequence $(X_k)$ converges superlinearly to zero. 
		
	\end{itemize}
\end{theorem}
\begin{proof}
(i) For $k\ge 0$, let $g_k\in \text{span}\{\Fv^1_k,\ldots, \Fv^n_k \}\cap  B_q(0)$  be the solution of the Newton equation
\ben\label{eq:newton_proof} 
\mathfrak D^2J(\Omega_k)(g_k)(\varphi) = - DJ(\Omega_k)(\varphi) \quad \text{ for all } \varphi\in \text{span}\{\Fv^1_k,\ldots, \Fv^n_k \}.
\een
  Since $J$ is twice differentiable,  Theorem~\ref{thm:decomposition_second_order} yields,
\ben\label{eq:decom_secon_proof}
D^2J(\Omega_k)(X)(Y) = \mathfrak D^2J(\Omega_k)(X)(Y) +  DJ(\Omega_k)(\partial XY) 
\een
for all $X,Y\in C^2(\overbar \VR^d,\VR^d)$.
Hence inserting $\Fv^i_{k,0}$ as test function into \eqref{eq:newton_proof} and using \eqref{eq:decom_secon_proof} yield,
\ben\label{eq:first_derivative}
DJ(\Omega_k)(\Fv^i_{k,0}) \stackrel{\eqref{eq:newton_proof} }{=} - \mathfrak D^2J(\Omega_k)(g_k)(\Fv^i_{k,0}) \stackrel{\eqref{eq:decom_secon_proof} }{=} - D^2J(\Omega_k)(\Fv^i_{k,0})(g_k) + DJ(\Omega_k)(\partial \Fv^i_{k,0} g_k). 
\een
   According to Lemma~\ref{lem:transport} the function $s\mapsto \Fv^i_{k,s},[0,1]\to C^1(\overbar \VR^d,\VR^d)$ is differentiable for $i=1,\ldots, n$. 
 Hence an application of the fundamental theorem of calculus to $s\mapsto DJ(T_k^s(\Omega))(\Fv^i_{k,s})$ on $[0,1]$ yields
\ben\label{eq:proof_newton_1}
DJ(T_k^1(\Omega_k))(\Fv^i_{k,1}) = DJ(T_k^0(\Omega_k))(\Fv^i_{k,0}) + \int_0^1 D^2J(T_k^s(\Omega_k))(\Fv^i_{k,s})(g_k\circ (\Id+sg_k)^{-1})\;ds + \int_0^1 DJ(T_k^s(\Omega_k))((\Fv^i_{k,s})') \;ds.
\een
 It is readily checked that $\Fv^i_{k,1} = \Fv^i_{k+1}$ and $\Fv^i_{k,0} = \Fv^i_k$.
Therefore using \eqref{eq:first_derivative} we can rewrite \eqref{eq:proof_newton_1} in the equivalent form
\ben\label{eq:proof_newton_12}
\begin{split}
DJ(T_k^1(\Omega_k))(\Fv^i_{k+1})  = & \int_0^1 D^2J(T_k^s(\Omega_k))(\Fv^i_{k,s})(g_k\circ (\Id+sg_k)^{-1}) -  DJ^2(T_k^0(\Omega_k))(\Fv^i_{k,0})(g_k)\;ds \\
&+ \int_0^1 DJ(T_k^s(\Omega_k))((\Fv^i_{k,s})') \;ds + DJ(\Omega_k)(\partial \Fv^{i,0}_kg_k).
\end{split}
\een
Using \eqref{eq:material_derivative} the previous equation reads
		\ben\label{eq:proof_newton_123}
		\begin{split}
			DJ(T_k^1(\Omega_k))(\Fv_{k+1}^i)  = & \int_0^1 D^2J(T_k^s(\Omega_k))(\Fv_k^{i,s})(g_k\circ (\Id+sg_k)^{-1}) -  DJ^2(T_k^0(\Omega_k))(\Fv_k^{i,0})(g_k)\;ds \\
			&+ \int_0^1 DJ(T_k^s(\Omega_k))(\dot \Fv_k^{i,s}\circ (T_k^s)^{-1}) \;ds \\
			& -\int_0^1DJ(T_k^s(\Omega_k))((\partial \Fv_k^{i,s} (\partial T_k^s)^{-1}g_k)\circ (T_k^s)^{-1})-  DJ(\Omega_k)(\partial \Fv_k^{i,0}g_k) \; ds.
		\end{split}
		\een
This shows, using (A2)-(A4), that there is $c\ge 0$, such that
$
|DJ(T_k^1(\Omega_k))(\Fv^i_{k+1})| \le p_k\|P_k\|^{-1}\|g_k\|_{C^1} + c \|g_k\|_{C^1}^2
$
and hence
\ben\label{eq:estimate_l}
|(l_{k+1})_i| = |DJ(T_k^1(\Omega_k))(\Fv^i_{k+1})|  \le p_k|X_k| + c|X_k|^2.
\een 
Since  $X_{k+1}$ solves the Newton equation $H_{k+1} X_{k+1} = -l_{k+1}$ we get using the boundedness of $H_{k+1}^{-1}$ and  \eqref{eq:estimate_l} that
	$
	|X_{k+1}| = |H_{k+1}^{-1}l_{k+1}| \le c|l_{k+1}| \le  p_k|X_k| + c|X_k|^2 
	$
	for all $k\ge 0$. Hence we may apply Lemma~\ref{lem:contraction} with $a_k :=|X_k|$, $q_2:=\tilde q$ and  $\bar q := c|X_0| + \tilde q$  to obtain $|X_{k+1}| \le \bar q^{k+1}|X_0|$ for all $k\ge 0$. Therefore $X_k\rightarrow 0 $ as 
	$k\rightarrow \infty$ and it also follows that $g_k\rightarrow 0$ 
	in $ C^1( \overbar\VR^d,\VR^d)$. 
	\newline
	
(ii) Now we show that $(F_k)$ is a Cauchy sequence in $\Cf$. Recall that  by definition $F_m=(\Id+g_{m-1})\circ \cdots \circ (\Id+g_0)$ for all 
$m\ge 1$ and $F_0=\Id$. 
Hence using the triangle inequality and the right-invariance of $d(\cdot,\cdot)$ gives
\ben\label{eq:fm_fm_n}
\begin{split}
	d(F_m,F_{m+n+1}) & = d((\Id+g_{m-1})\circ \cdots \circ 
	(\Id+g_0),(\Id+g_{m+n})\circ \cdots \circ (\Id+g_{m-1})\circ \cdots \circ (\Id+g_0))\\
	& = d(\Id ,(\Id+g_{m+n})\circ \cdots \circ (\Id+g_m))  \le \sum_{\ell=m}^{n+m} d(\Id, \Id+g_\ell)
\end{split}
\een
for all $m,n\ge 1$. 
Further, in view of Lemma~\ref{lem:estimate_metric} and estimate 
\eqref{eq:convergence_rate}, we get for all $\ell\ge 0$, 
\ben\label{eq:est_gl_id}
\begin{split}
	d(\Id, \Id+g_\ell) & \le \|g_\ell\|_{C^1} + \|g_\ell\|_\infty + 2\|\partial g_\ell\|_\infty (\|\partial g_\ell\|_\infty + 1)\\
	& \le 2(1+(\alpha^\ell\|g_0\|_{C^1} + 1))\alpha^\ell \|g_0\|_{C^1} \le 5 \alpha^\ell \|g_0\|_{C^1}.
\end{split}
\een
So using the previous inequality together with $\alpha<1$ to further estimate \eqref{eq:fm_fm_n} we find
\ben\label{eq:apriori_}
d(F_m,F_{m+n+1}) \le 5\|g_0\|_{C^1} \sum_{\ell=m}^{n+m} \alpha^\ell = 5\|g_0\|_{C^1}\alpha^m\frac{(1-\alpha^{n+1})}{1-\alpha}.
\een
 The right hand side of 
\eqref{eq:apriori_} tends to zero  as $m,n\rightarrow 0$. This shows that $(F_m)$ is a Cauchy sequence in complete metric space
$\Cf$ and therefore we find 
$F_*\in \Cf$, such that $d(F_m, F_*) \rightarrow 0$ as $ m\rightarrow \infty.$ 
Hence passing to the limit $n\rightarrow \infty$ in \eqref{eq:apriori_} yields the a-priori estimate \eqref{eq:apriori}. It remains to show that $F_*$ is a root.  Let use define
\ben
\Fv^i_*(y):= \hat \phi(x^i_*-y)\nu_{\partial \Omega_*}(x^i_*),\quad x^i_*:= F_*(x_i), \quad \Omega_* := F_*(\omega_0).
\een
 Thanks to Lemma~\ref{lem:convergence_F_n} we know that $F_k\rightarrow F^*$ in $\Cf$ as $k\to \infty$ implies $ F_k\circ (F^*)^{-1}-\Id\rightarrow 0$ in $\ac C^1(\overbar\VR^d,\VR^d)$ as $k\to \infty$. As a result we infer from Corollary~\ref{corr:esitmate_kernel_parallel_transport},
\ben
\begin{split}
\|\Fv_k^i - \Fv_*^i\|_{C^1} &= \|\Fv_{\partial \Omega_k}^{x_k^i}-\Fv_{\partial \Omega_*}^{x_*^i}\|_{C^1} = \|\Fv_{F_k\circ F_*^{-1}(\partial \Omega_*)}^{F_k\circ F_*^{-1}(x^i_*)}-\Fv_{\partial \Omega_*}^{x_*^i}\|_{C^1} \le  c_2\|F_k\circ (F^*)^{-1}-\Id\|_{C^1} \rightarrow 0 \quad \text{ as } k\to \infty,
\end{split}
\een
for all $i=1,\ldots, n$. Now employing the continuity properties of the first and 
second derivative, and $g_k\rightarrow 0$ in $\ac C^1(\overbar\VR^d,\VR^d)$, we can pass to the limit in the Newton equation \eqref{eq:newton_equation}.
This shows that $DJ(F^*(\Omega))(\Fv^*_{\sigma^*,i})=0$ for $i=1,\ldots, n$. 
\end{proof}

\section{Numerical aspects and applications}
The goal of this section is to verify the convergence rates proved in Theorem~\ref{thm:main_Newton}. For this purpose we study a simple shape function for which the global solution and stationary points are  known. We compare the solutions obtained with the boundary and domain Hessian and examine the influence of the boundary discretisation on the convergence rates.

For every bounded and open set $\Omega\subset \VR^2$ define the shape function 
\ben\label{eq:functional_numerics}
J(\Omega):=\int_{\Omega} \fsf \; dx, 
\een
where $\fsf\in C^2(\VR^2)$ is a given function and specified for two different test cases below. A global minimiser of the above shape function is given by $\Omega^* = \{\fsf<0\}$, however, this shape function exhibit infinitely many stationary points depending on the nature of $\fsf$.  Although this example might seem trivial it already features many difficulties when 
we use the domain Hessian. 

\subsection{Discrete setting}
 Let $\omega_0$ be a bounded domain with $C^1$ boundary. Then 
we approximate $\omega_0$ by a domain  $\omega_0^h\subset \VR^2$ that has a polygonal boundary $\partial \omega_0^h$ with vertices $\mathcal Y_0 := \{y_1,\ldots, y_N\}$, $N\ge 1$. We set $h:=1/N$. The set $\mathcal Y_0$ is assumed to be ordered and 
contained in $\partial \omega_0$. In this sense the set $\mathcal Y_0$ is an approximation of $\partial \omega_0$. We then select a subset $\mathcal X_0 = \{x_1,\ldots, x_n\}$ of $\mathcal Y_0$, $n\le N$, where $n$ corresponds to the number of approximate basis functions.  All subsequently appearing integrals over $\omega_0^h$ are evaluated using second order Lagrangian finite elements. The domain $\omega_0^h$ is then updated by moving the points
$\mathcal Y_0$.   

Let us now describe how we approximate the normal vector field along $\partial \omega_0^h$. Take three consecutive points  $y_{i-1}$,$y_i$ and $y_{i+1}$  in $\mathcal Y_0$. The normal of the edge between $y_{i-1}$ and $y_i$ namely $e_i:=\{sy_{i-1}+(1+s)y_i:\;s\in [0,1] \}$ is defined by $\nu_i := Je_i/|Je_i|$, where $J$ is the counter clockwise 90 degree 2D 
rotation matrix. We then define the normal at vertex $y_i$ by 
$\nu_i := (e_i+e_{i+1})/|e_i+e_{i+1}|$. 

Let now $F_k \in \Cf$ be a sequence of transformations. We define $\mathcal Y_k := \{F_k(y_1),\ldots, F_k(y_N)\}$ and $\mathcal X_k := \{F_k(x_1),\ldots, F_k(x_n)\}$. We denote by $\nu_i^k$ the normals 
constructed above using the polygon $\mathcal Y_k$.

For our experiments we use $\phi(r):= (1-r)_+^4(4r+1)$ to construct our basis functions $\Fv^i_k := \Fv_{\mathcal Y_k}(y):=\phi(|F_k(x_i)-y|/\sigma_k)\nu_k^i$, $\sigma>0$. 
We update $\sigma$ in each iteration by $\sigma_k := \gamma \max_{i=1,\ldots, n-1}|F_k(x_i) - F_k(x_{i+1})|$, where $\gamma\ge 1$ is a factor determining how many basis functions fall into the influence cover of each basis function $\Fv^x$. 
As only small shape variations are considered, the number $n$ is kept constant. However, for large shape deformations one probably has 
to include new control points $x_i$ in order to keep the condition number of the Hessian 
within a computable range.

We now state the discrete analog of Algorithm~\ref{eq:algo_newton_discrete}.

\begin{algorithm}[H]
	
	%  \SetLine % For v3.9
	
	%\SetAlgoLined % For previous releases [?]
	
	\KwData{Let  $\gamma>0$ and $n, N\in \VN$ be given. Choose  
		$\Omega\subset \VR^d$ and 
		$\mathcal X_0 := \{x_1,\ldots, x_n\}\subset \Omega$. Let 
		$F_0:=\Id$. }
	
	%\KwResult{how to write algorithm with \LaTeX2e }
	
	initialization\;
	
	\While{ $k  \le  N $}{
		
		1.) Compute $X_k\in \VR^n$ as solution of
			$
			 H_k X_k = -l_k.
			$
			and set $g_k := P_k(X_k)$. 
		
		2.) Update $F_{k+1} \gets (\Id+g_k)\circ F_k$.

		3.)     Update $\mathcal X_{k+1} \gets \{F_{k+1}(x_1),\ldots, F_{k+1}(x_n)\}$.
		
		4.)       Update $\mathcal Y_{k+1} \gets \{F_{k+1}(y_1),\ldots, F_{k+1}(y_N)\}$. 
		
		5.)      Update $\Fv^{i}_{k+1} \gets  \Fv^{i}_k  $. 	
		
		\eIf{ If $|X_k|\le \gamma$: exit program }{            
			step accepted: continue program\; 
		}{
		no sufficient decrease: quit\;
	}            
	increase $k \gets k+1$\;
}
\caption{Newton algorithm} \label{eq:algo_newton_discrete} 
\end{algorithm}

\begin{figure}
	\begin{center}
		\resizebox{0.4\textwidth}{!}{
	\begin{tikzpicture}

	\coordinate (a) at (0, 0){};
	\coordinate (b) at (1, 2.5){};

	\coordinate (c) at (3.5, 3.5){};
	\coordinate (d) at (5, 2.5){};
	\coordinate (e) at (6, 1.5){};
	\coordinate (f) at (5, 0){};
	\coordinate (g) at (3.5, -1.5){};
	\coordinate (h) at (1.5, -1.5){};
	\coordinate (i) at (1.5, -1.5){};
	\coordinate (j) at (1.5, -1.5){};
	\coordinate (k) at (1.5, -1.5){};

	\draw[point] let \p{a}=(a),\p{b}=(b) in ($  ((\p{a})!.50!(\p{b})$) node[point]{}; 
	\draw[point] let \p{b}=(b),\p{c}=(c) in ($  ((\p{b})!.50!(\p{c})$) node[point]{}; 
	\draw[point] let \p{c}=(c),\p{d}=(d) in ($  ((\p{c})!.50!(\p{d})$) node[point]{}; 
	\draw[point] let \p{d}=(d),\p{e}=(e) in ($  ((\p{d})!.50!(\p{e})$) node[point]{}; 
	\draw[point] let \p{e}=(e),\p{f}=(f) in ($  ((\p{e})!.50!(\p{f})$) node[point]{}; 
	\draw[point] let \p{f}=(f),\p{g}=(g) in ($  ((\p{f})!.50!(\p{g})$) node[point]{}; 
	\draw[point] let \p{g}=(g),\p{h}=(h) in ($  ((\p{g})!.50!(\p{h})$) node[point]{}; 
	\draw[point] let \p{h}=(h),\p{a}=(a) in ($  ((\p{h})!.50!(\p{a})$) node[point]{};

	\draw[point][thick] (a)  -- (b) node[point]{} -- (c) node[point]{} -- 
	(d) node[point]{} -- (e)node[point]{} -- (f)node[point]{}  -- (g)node[point]{} 
	-- (h)node[point]{} -- (i)node[point]{} -- (j)node[point]{} -- (k)node[point]{} --  (a) node[point]{} ;
	%\draw[point][thick]        (b)  -- (c) node[point]{} -- (d) node[point]{};
	%\draw[point][thick]        (c)  -- (d) node[point]{} ;
	
	%\draw [red] plot [smooth cycle] coordinates {(a),(b),(c),(d)};
	
	%\draw (a) to [bend left=20](b) ;
	
	\node (ab1) at ($  (a)!.50!(b)$) {};
	\node (bc1) at ($  (b)!.50!(c)$) {};
	\node (cd1) at ($  (c)!.50!(d)$) {};

	\node (t1) at ($(b)- (a)$){};
	\node (t2) at ($(c)- (b)$){};
	\node (t3) at ($(d)- (c)$){};
	
	\node (n1) at ($  (0,0)!1cm!90:(t1)$) {};
	\node (n2) at ($  (0,0)!1cm!90:(t2)$) {};
	\node (n3) at ($  (0,0)!1cm!90:(t3)$) {};

	\draw[->,orange] ($(ab1)$) -- ( $(n1)+(ab1)$);
	\draw[->,orange] ($(bc1)$) -- ( $(n2)+(bc1)$);
	%\draw[->,orange] ($(cd1)$) -- ( $(n3)+(cd1)$);

	\node (n12) at ($  (0,0)!1cm!($(n1) + (n2)$)$) {};
	\node (n23) at ($  (0,0)!1cm!($(n2) + (n3)$)$) {};

	\draw[->,red] ($(b)$) -- ( $(n12)+(b)$);
	\draw[->,red] ($(c)$) -- ( $(n23)+(c)$);

	\node[anchor=north,text width=2cm] (solution) at (4,2) {
		$\Omega$};
	
	\node[text width=1cm] (solution) at ($(b)+(-1,0.2)$) {
		\begin{align*}
		\nu(x_0)
		\end{align*}};
	
	\node[text width=1cm] (solution) at ($(b)+(0.5,0.4)$) {
		$x_0$};
	
	\node[text width=1cm] (solution) at ($(c)+(1,0.2)$) {
		$x_1$};
	
	\node[text width=1cm] (solution) at ($(d)+(1,0.2)$) {
		$x_2$};
	
	\node[text width=1cm] (solution) at ($(e)+(1,0.2)$) {
		$x_3$};
	
	\node[text width=1cm] (solution) at ($(f)+(0.8,-0.2)$) {
		$x_4$};
	
	\node[text width=1cm] (solution) at ($(g)-(-0.3,0.3)$) {
		$x_5$};
	
	\node[text width=1cm] (solution) at ($(h)-(0.2,0.2)$) {
		$x_6$};
	
	\node[text width=1cm] (solution) at ($(a)-(0.2,0.2)$) {
		$x_7$};

	%\normalize{(n1)}
	%\normalize[(ab1)]{(a)};
	
	%\draw[->,blue] let \p{tn1}=(tn1), \p{ab1} = (ab1) in
	%{[rotate=0] ($(\p{ab1})$) --  ($(\p{tn1}) + (\p{ab1})$)   };

	%\draw[blue] let \p{n1}=(n1) in
	%{[rotate=90] (\p{n1}) --  (0,0)};

	%  {[shift={(b)}] (\p{b}) -- (\p{c})};
	
	%\draw[green] (c)--(bc1);
	%\draw[green]let \p{c}=(c), \p{ab1}=(ab1) in { (c)--(ab1)};
	%\draw[red] (a)--(b);

	%\pgftransformcm{cos(90)}{-sin(90)}{sin(90)}{cos(90)}{\pgfpoint{.25cm}{.25cm}};

	%\draw[blue] let \p{b}=(b),\p{c}=(c) in
	%  {[rotate=20] (\p{b}) -- (\p{c})}
	%  {[shift={(b)}] (\p{b}) -- (\p{c})};
	
	%\pgftransformshift{\pgfpoint{3cm}{3cm}}
	
	%  \draw [->](-5,0) -- (5,0); %x-axis
	%  \draw [->](0,-5) -- (0,5); %y-axis
	%\draw[blue,cm={cos(45) ,-sin(45) ,sin(45) ,cos(45) ,(3 cm,5 cm)}](1,1) -- (1,-1) -- (-1,-1) -- (-1,1) -- (1,1);

	%\draw ($(a)$) -- ($ (b) $);
	
	%\draw ($  (ab1) $) -- ($ (ab1) + (ab2) $);
	
	%\draw[line width=2mm, thick, ->] (A) -- (B);

	%\draw [thick] (0,-.1) node[below]{0} -- (0,0.1);
	%\draw [thick] (0.5,-.1) node[below]{$a=b=1/2$} -- (0.5,0.1);
	%\draw [thick] (1,-.1) node[below]{1} -- (1,0.1);
	\end{tikzpicture}
		}
	\caption{Schematic polygonial domain $F_k(\omega_0^h)$ with vertices 
		$\{x_0,\ldots, x_7\}$ and normal field $\nu$. }
	\end{center}
\end{figure}


\subsection{Newton methods}
\paragraph{Choice of Hessian}
At each iteration $\Omega_k$ we have two Hessians at our disposal, namely \eqref{eq:hess_bnd} and \eqref{eq:hess_volume},
\begin{align}
	H_{\Omega_k,J}^{\text{vol}}(X)(Y) & =  \int_{\Omega_k} T_1(X): \partial Y + T_0(X)\cdot Y \; dx,\\
	H_{\Omega_k,J}^{\text{bry}}(X)(Y)&= \int_{\partial \Omega_k}  (\nabla \fsf\cdot \nu_k + \kappa\fsf) (X\cdot \nu_k)(Y\cdot \nu_k)\; ds,
\end{align}
where $\nu_k$ and $\kappa_k$ denote the outward pointing unit normal vector field and the 
curvature of $\partial \Omega_k$, respectively. 
We know that both Hessians coincide when $X$ and $Y$ are restricted to normal fields along $\partial \Omega$. The Newton equation at iteration $k$ using $H_{\Omega_k,J}^{\text{vol}}$ reads: find $g_k^{\text{vol}}\in \text{span}\{\Fv^1_k,\ldots, \Fv^n_k\}$, such that
\ben\label{eq:hess_volume_numerics}
\int_{\Omega_k} T_1(g_k^{\text{vol}}): \partial Y + T_0(g_k^{\text{vol}})\cdot Y \; dx = \int_{\Omega_k} \VS_1: \partial Y + \VS_0\cdot Y \; dx\quad \text{ for all } Y \in \text{span}\{\Fv^1_k,\ldots, \Fv^n_k\},
\een
where we recall that $DJ(\Omega_k)(Y) = \int_{\Omega_k} \VS_1: \partial Y + \VS_0\cdot Y \; dx$ with $\VS_1 = f I$ and $\VS_0 = \nabla f$. The discrete volume shape Hessian and first derivative are given by  $H_k^{vol} := (H_{\Omega_k,J}^{\text{vol}}(\Fv^i_k)(\Fv^j_k))_{i,j=1,\ldots, n}$ and 
$(l_k^{bry})_{i=1,\ldots, n} = (\int_{\Omega_k} \VS_1: \partial \Fv^i_k + \VS_0\cdot \Fv^i_k \; dx)_{i=1,\ldots, n}$. 

The Newton equation at iteration $k$ using the boundary shape Hessian $H_{\Omega_k,J}^{bry}$ reads: find $g_k^{\text{bry}}\in \text{span}\{\Fv^1_k,\ldots, \Fv^n_k\}$, such that
\ben\label{eq:hess_boundary_numerics}
\int_{\partial\Omega_k} (\nabla f\cdot \nu_k + \kappa_k f) (g_k^{\text{bry}}\cdot\nu_k) (Y\cdot \nu_k) \; ds = \int_{\partial\Omega_k} (S_1\nu_k \cdot \nu_k) (Y\cdot \nu_k)\; ds\quad \text{ for all } Y \in \text{span}\{\Fv^1_k,\ldots, \Fv^n_k\}.
\een
As in a stationary point $\Omega$ we have $f=0$ on $\partial \Omega$, we (as in \cite{MR3201954}) neglect $\kappa_k f$ in our experiments. Accordingly we take as discrete boundary shape Hessian and first derivative 
\[
H_k^{bry} := \left(\int_{\partial\Omega_k} \nabla f\cdot \nu_k  (\Fv^i_k\cdot\nu_k) (\Fv^j_k\cdot \nu_k) \; dx \right)_{i,j=1,\ldots, n}, \quad l_k^{bry} = \left(\int_{\partial\Omega_k} \VS_1\nu_k \cdot \nu_k (\Fv^i_k\cdot \nu_k)\; ds\right)_{i=1,\ldots, n}.
\]
In the boundary and domain Hessian case we run Algorithm~\ref{eq:algo_newton_discrete} with the discrete Hessian $H_k$ and the first derivative $l_k$ given by $H^{vol}_k$, $l^{vol}_k$ and $H^{bry}_k$, $l^{bry}_k$, respectively. 

We now replace the approximate normal functions $\Fv^i_k$ by the normal basis functions $\Fw^i_k$ defined in \eqref{eq:normal_basis}. Then neglecting the term 
$\fsf \kappa_k$  \eqref{eq:hess_boundary_numerics} becomes: find $g^{\text{bry}}_k(x) = \sum_{i=1}^n \gamma^i_k\nu_k(x) \ksf(x^i_k,x) \in \text{span}\{\Fw^1_k,\ldots, \Fw^n_k\}$,	so that
	\ben
	\int_{\partial\Omega_k} \nabla f\cdot \nu_k (g^{\text{bry}}_k\cdot \nu_k) (Y\cdot \nu_k) \; ds = -\int_{\partial\Omega_k} (\VS_1\nu_k \cdot \nu_k) (Y\cdot \nu_k)\; dx\quad \text{ for all } Y \in \text{span}\{\Fw^1_k,\ldots, \Fw^n_k\}.
	\een
The last equation is completely equivalent to: find $\gamma_k \in \text{span}\{\ksf(x^1_k,\cdot),\ldots, \ksf(x^n_k,\cdot)\}$, so that
	\ben
	\int_{\partial\Omega_k} \nabla f\cdot \nu_k  \gamma_k \alpha \; dx = -\int_{\partial\Omega_k} (\VS_1\nu_k \cdot \nu_k) \alpha \; dx\quad \text{ for all } \alpha \in \text{span}\{\ksf(x^1_k,\cdot),\ldots, \ksf(x^n_k,\cdot)\}.
	\een	
The function $\gamma^k$ is an approximation of $\gamma_{\text{ana}}^k := -\fsf /(\nabla\fsf\cdot \nu_k)\in C^\infty(\partial \Omega_k)$ which is precisely the solution of the Newton equation 
$\text{Hess} J(\Omega_k)[\gamma_{\text{ana}}^k] = - \grad J(\Omega_k)$. Also here we omit $\kappa_k f$ in our computation. We call $\gamma_{\text{ana}}^k$ approximated Riemannian Hessian. In each iteration the domain is then moved via $(\Id + g^{\text{bry}}_k)(\partial \Omega_k)$. In our numerical experiments we $\kappa_k := |\hat \gamma_{k+1}|/|\hat \gamma_k|$ as a measure of the speed of convergence, where $\hat \gamma_k = (\gamma_k^1,\ldots, \gamma_k^n)^\top$ is the coefficient vector 
corresponding to the expansion of $\gamma^k$ in the basis $w^l_k$.

\paragraph{Example 1: an ellipse}
As in \cite{MR3201954} we consider $J$ given by \eqref{eq:functional_numerics} with
$
\fsf(x_1,x_2) := (\mu x_1^2+ x_2^2 - 1).
$
The corresponding minimisation problem to minimise $J$ over $\Omega$ has a unique solution, the domain $\Omega$ enclosed by the ellipse $\{(x_1,x_2):\;x_1^2+ x_2^2 = 1\}$.

As the convergence rates are only proved for initial shapes sufficiently close to the stationary point, we choose a circle centered at the origin with radius $r=0.9$. 
We select $\mu =2$ to compare our results with \cite{MR3201954}. In the top row of Figure~\ref{fig:ellipse_comparision_hessians} the convergence rates of Newton's method using different number of control points are shown. In Figure~\ref{fig:snapshots_comparision_hessians} we show several snapshots of the shape progress.  We see that all three Hessians yield similar results. 

In Figure~\ref{fig:ellipse_comparision_hessians_different_interface_numbers} we study the dependence of the convergence rates on the
number of boundary points. Notice that the number of boundary points is not equal to the number of approximate basis functions. In fact in 
Figure~\ref{fig:ellipse_comparision_hessians_different_interface_numbers}  the number of basis functions is kept constant at $N=60$ and 
the number of boundary points range from $N_{int}\approx 100$ to $\approx 600$. We see that for all three Hessians the convergence rates improve when 
we choose more boundary points. In Figure~\ref{table:errors}  the corresponding function values are displayed. After iteration four the 
cost function value for all three methods coincide up to the 
sixth decimal place.  We observe that the function value is different for all three methods which means that the three methods compute three different, though very close, stationary points of $J$. 

\begin{figure}
	\begin{center}
		\includegraphics[width=0.99\textwidth]{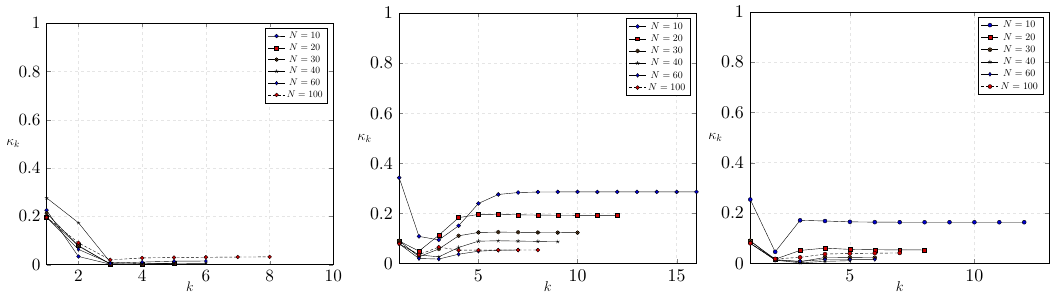}
		\includegraphics[width=0.99\textwidth]{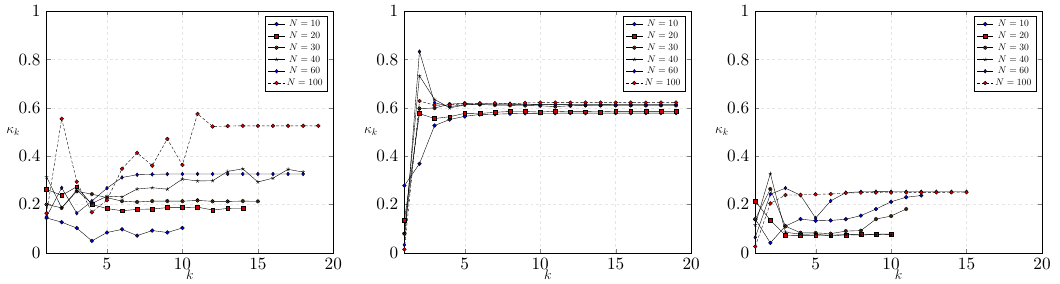}
		\caption{Top row: example 1 (ellipse); bottom row: example 2 (square); comparison of $\kappa_k := |X_{k+1}|/|X_k|$ using different number of control points; left: domain Newton method; middle: boundary Newton method; right: approximated Riemannian Hessian; algorithm is terminated when $|X_k|\le 1e-10$}
		\label{fig:ellipse_comparision_hessians}
	\end{center}
\end{figure}

\begin{figure}
	\begin{center}
		\includegraphics[width=0.9\textwidth,height=3.7cm]{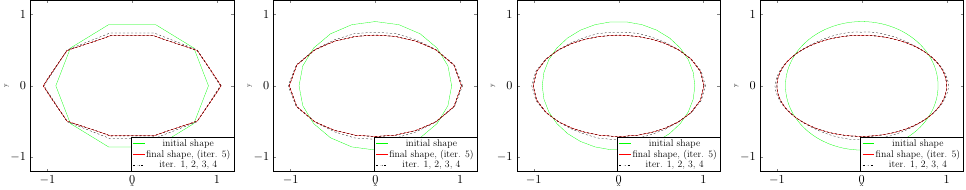}
		\includegraphics[width=0.9\textwidth,height=3.7cm]{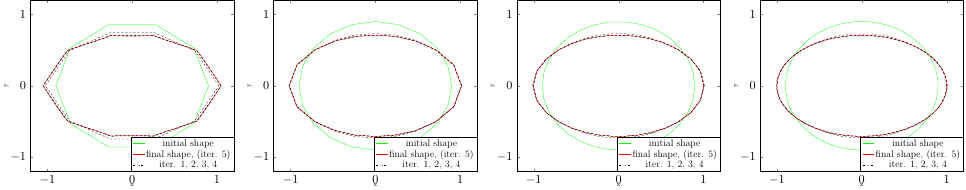}
		\includegraphics[width=0.9\textwidth,height=3.7cm]{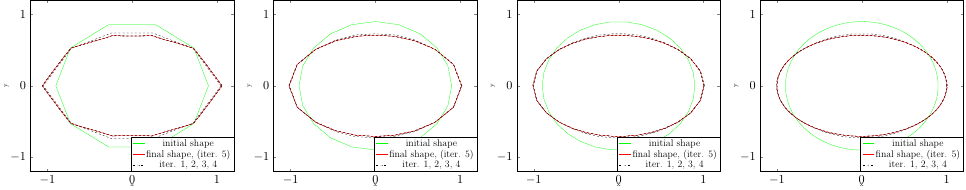}
		
		\caption{Shown are several snapshots of the shape progress for example 1 (ellipse); from left to right we used 
			$N=10,20,30$ and $100$ control points; from top to bottom: domain Hessian; boundary Hessian; approximated Riemannian Hessian}
		\label{fig:snapshots_comparision_hessians}
	\end{center}
\end{figure}

\begin{figure}
	\begin{center}
		\includegraphics[width=0.9\textwidth,height=3.7cm]{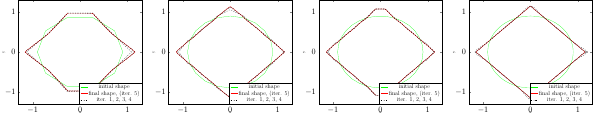}
		\includegraphics[width=0.9\textwidth,height=3.7cm]{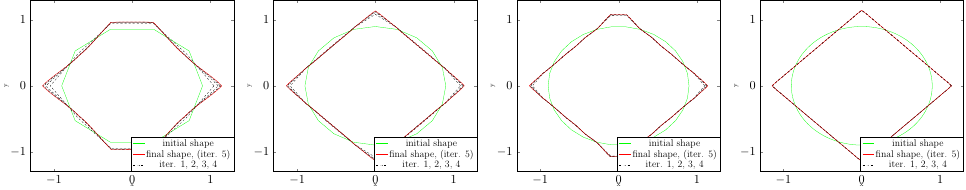}
		\includegraphics[width=0.9\textwidth,height=3.7cm]{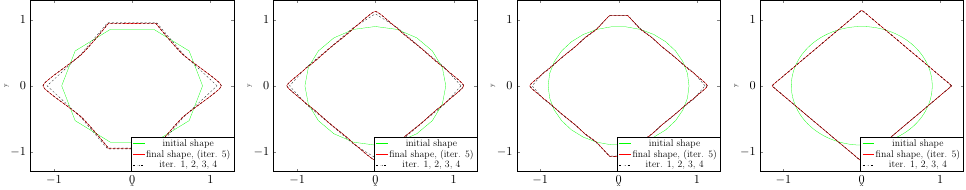}
		
		\caption{Shown are several snapshots of the shape progress for example 2 (square); from left to right we used 
			$N=10,20,30$ and $100$ control points; from top to bottom: domain Hessian; boundary Hessian; approximated Riemannian Hessian}
		\label{fig:snapshots_comparision_hessians_square}
	\end{center}
\end{figure}

\begin{figure}
	\begin{center}
		\includegraphics[width=0.99\textwidth]{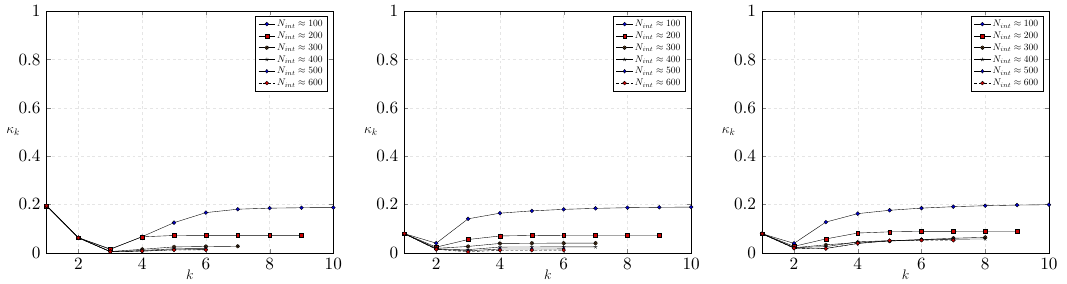}
		\caption{example 1 (ellipse); comparison of $\kappa_k := |X_{k+1}|/|X_k|$ for fixed number of control points $N=60$ and 
			different numbers of interface points $N_{int}$; left: domain Newton method; middle: boundary Newton method; right: approximated Riemannian Hessian; algorithm is terminated when $|X_k|\le 1e-10$}
		\label{fig:ellipse_comparision_hessians_different_interface_numbers}
	\end{center}
\end{figure}

\begin{table}
	\begin{center}
		\begin{tabular}{|c| c| c| c|c|c|c|} 
			\hline
			 iteration $k$ &   domain Hessian   &   boundary Hessian &     approx. Riemannian Hessian \\ [0.3ex] 
			\hline\hline
			    0    & -0.999584093291   			& -0.999584093291       & -0.999584093291	  \\ 
			    1    & -1.10590500673	  			& -1.1098402937 	  	& -1.10981595205    \\
				2	& -1.11070220243   		     	& -1.11072033698        & -1.11072034082	  \\
			 	3	& -1.11072032903   			    & -1.11072049441	    & -1.11072048816	  \\
			 	4	& -1.11072032951   			    & -1.11072049443	    & -1.11072048816	  \\
			 	5	& -1.11072032951   			    & -1.11072049443	    & -1.11072048816	  \\
			 	6	& -1.11072032951   			    & -1.11072049443 	    & -1.11072048816	  \\
			\hline
		\end{tabular}
		\caption{Example 1 (ellipse); $N=100$ control points column: $J(\Omega_k)$ using different Hessians; row: iteration; exact value of $\int_{\Omega} \fsf_1\; dx$ is -1.11072073431 when the ellipse $\partial \Omega$ is approximated by a polygon with 1400 points and the integral is evaluated using linear finite elements}
		\label{table:errors}
	\end{center}
\end{table}

\paragraph{Example 2: square}
As a second example we take $J$ as in \eqref{eq:functional_numerics} defined with the function
$
\fsf(x_1,x_2) :=  |x_1|+ |x_2| - 1.3.
$
Notice that $\fsf_1$ is weakly differentiable, but it is not continuous differentiable. The minimisation problem has a unique solutions, the domain $\Omega$ enclosed by the square $\{(x_1,x_2):\; |x_1|+ |x_2| = 1.3\}$. In order
to compute the second derivative of $\fsf$ we first  $L_2$ project the first derivative onto linear finite elements on $\Omega$ and take the second derivative of this derivative. The convergence results are shown in the bottom row of Figure~\ref{fig:ellipse_comparision_hessians}. The numerical algorithm is terminated if either $|X_k|\le 1e-10$ or if the maximal iteration number of 20 is reached. Some snapshots of the iterations are shown in Figure~\ref{fig:snapshots_comparision_hessians_square}.

\subsection{Gradient method}
\paragraph{Euclidean metric}
We now compare the difference between a gradient and Newton method. For this purpose we choose the Eulcidean metric (see \cite{eigelsturm16}) on the approximate space as inner product. The Euclidean metric is defined by
$
(\Fv^i_k,\Fv^j_k)_{\Omega_k} := \delta_{ij}
$
and extend this inner product to $\text{span}\{\Fv^1_k, \ldots, \Fv^n_k \}$. 
Then the steepest descent direction in this metric given as solution $g_k\in \text{span}\{\Fv^1_k, \ldots, \Fv^n_k \}$ of
\ben
(g_k,Y)_{\Omega_k} = - \int_{\Omega_k} \VS_1:\partial Y + \VS_0\cdot Y \; dx \quad \text{ for all } Y \in \text{span}\{\Fv^1_k, \ldots, \Fv^n_k \}.
\een
In each step the domain $\Omega_k$ is updated via $(\Id+s_k g_k)(\Omega_k)
$ with $s_k >0$ denoting the step size. It is readily seen (cf. \cite{eigelsturm16}) that
$
g_k = - \sum_{\ell=1}^n DJ(\Omega_k)(\Fv^\ell_k)\Fv^\ell_k.
$
As an initial shape we take again the domain $\Omega$ enclosed by the circle centered at the origin with radius $0.9$. A constant step size of $s_k = 0.4$ has been chosen. We terminate the algorithm if either $|X_k|\le 1e-10$ or after maximum of $100$ iterations. The results for the square and ellipse are depicted in Figure~\ref{fig:snapshots_comparision_gradient}. The difference between the Newton method is both visible from the shape progress and the convergence speed.

\begin{figure}
	\begin{center}
		\includegraphics[width=0.99\textwidth]{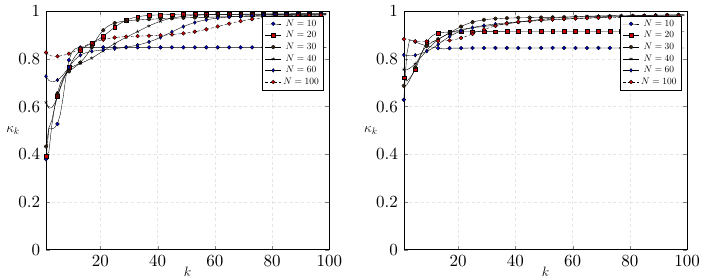}
		\caption{gradient algorithm with Euclidean metric; y-axis depicts $\kappa_k := |X_{k+1}|/|X_k|$ and x-axis depicts iteration number;  different number of control points $N$ are employed; left: example 1 (ellipse); right: example 2 (square) algorithm is terminated after a maximum of $100$ iterations}
		\label{fig:ellipse_comparision_gradient}
	\end{center}
\end{figure}

\begin{figure}
\begin{center}
	\includegraphics[width=0.9\textwidth,height=3.7cm]{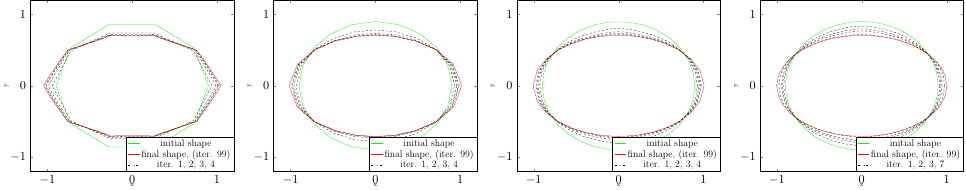}
	\includegraphics[width=0.9\textwidth,height=3.7cm]{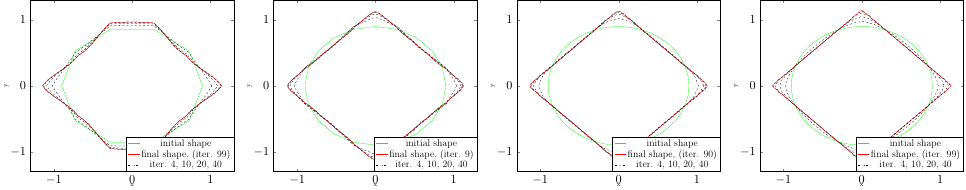}	
	\caption{Shown are several snapshots of the shape progress for using a gradient method with Euclidean metric; top row: example 1 (ellipse); bottom row: example 2 (square); from left to right we used 
		$N=10,20,30$ and $100$ control points}
	\label{fig:snapshots_comparision_gradient}
\end{center}
\end{figure}

\subsection*{Conclusion}
In this paper we have examined a Newton method defined via approximate normal functions that can be interpreted as the discretised version of an infinite dimensional 
Newton method. We introduced two different notions of Hessian, the domain and boundary Hessian.  
We proved superlinear convergence of a Newton method using the domain Hessian. 
In general quadratic convergence is lost when the vector fields are only approximated by 
approximate normal functions. Finally our results are validated by numerical experiments studying the convergence rates dependent on the discretisation.

The thorough numerical investigation using our approximate normal functions   also indicates that  for large shape deformations 
 the usage of the boundary shape Hessian
$H^{\text{bry}}_{\Omega,J}$ is favorable.  This can be explained by the fact that according to Theorem~\ref{thm:structure_second} the domain Hessian also contains tangential components which are not entirely eliminated by the 
approximate normal functions. However when we are close to a stationary domain no
significant difference has been observed. Nevertheless for some applications it may make sense to use the domain Hessian and therefore in order to allow for larger shape deformation as well different basis functions that are "more" normal to the boundary have to be found. Here the difficulty lies in the fact that the 
domain expression has to be evaluated with vector fields defined on $\VR^d$.  The search for such novel functions is challenging topic will be part of a future project.

\section*{Appendix}
\begin{proof}[Proof of Lemma~\ref{lem:contraction}]
We prove the lemma by induction. At $k=0$ we have by \eqref{eq:ak}
\ben
\begin{split}
	a_1 & \le \underbrace{c a_0}_{\le q_1} a_0 + q_2a_0 \le (q_1+q_2)a_0 = \bar q.  
\end{split}
\een	
Now suppose the results holds for all $k=0,\ldots, n-1$. Then from  \eqref{eq:ak}
\ben
\begin{split}
	a_{n+1} &  \le c\underbrace{a_n^2}_{\le \bar q^{2n} a_0^2}  + q_2 \bar q^{n} a_0  
	  \le \underbrace{ca_0}_{\le q_1} \bar q^{2n} a_0 + q_2 \bar q^{n} a_0
	 \le  \bar q^n (\underbrace{q_1 \bar q^n}_{\le q_1} + q_2  )a_0
	 \le (q_1+q_2)\bar q^{n} a_0 = \bar q^{n+1} a_0.
\end{split}
\een
This shows that \eqref{eq:ak} is also true for $k=n$ which finishes the proof. 	
\end{proof}

\begin{proof}[Proof of Lemma~\ref{lem:continuity_group}]
		Define $\Theta_k:= (\Id+f_k)\circ \cdots \circ (\Id+f_n)$, $k=1,\ldots, n$.  Then 
		it is readily checked that the recursive formula 	
		$
		\Theta_k = \Theta_{k+1} + f_k\circ \Theta_{k+1}
		$
		for $k=1,\ldots, n-1$ holds.  Summing over $k=1,\ldots, n-1$ and recalling 
		the telescope sum, we get
		\ben\label{eq:recursion}
		\sum_{k=1}^{n-1} f_k\circ \Theta_{k+1} = \sum_{k=1}^{n-1}(\Theta_k-\Theta_{k+1}) = \Theta_1 
		- \Theta_n. 
		\een
		Then \eqref{eq:recursion} together with the fact that $\Theta_k$ are 
		homeomophisms yield
		\ben\label{eq:est_1}
		\begin{split}
			\|\Theta_1-\Id\|_\infty &\le \|f_n\|_\infty +  \sum_{k=1}^{n-1}  \|f_k\circ 
			\Theta_{k+1}\|_\infty
			= \sum_{k=1}^n  \|f_k\|_\infty
		\end{split}
		\een
		Now observe that for all $k$,
		we have
		$
		\|\partial \Theta_k\|_\infty  \le (1+\|\partial f_k\|_\infty)\cdots(1+\|\partial f_n\|_\infty)
		\le e^{\sum_{\ell=k}^n\|\partial f_\ell\|_\infty}
		$ and consequently
		\ben\label{eq:est_2}
		\begin{split}
			\|\partial \Theta_1-I\|_\infty &\le \|\partial f_n\|_\infty +  
			\sum_{k=1}^{n-1}  \|(\partial f_k)\circ 
			\Theta_{k+1}(\partial \Theta_{k+1})\|_\infty \\
			&  \le  \|\partial f_n\|_\infty +  
			\sum_{k=1}^{n-1} e^{\sum_{\ell={k+1}}^n\|\partial f_\ell\|_\infty} \|\partial f_k\|_\infty\\
			& \le \|\partial f_n\|_\infty +  
			e^{\sum_{\ell=1}^n\|\partial f_\ell\|_\infty}\sum_{k=1}^{n-1} \|\partial 
			f_k\|_\infty\\
			& \le e^{\sum_{\ell=1}^n\|\partial f_\ell\|_\infty} \sum_{k=1}^n\|\partial 
			f_k\|_\infty,
		\end{split}
		\een
		where in the last step we used $e^\eps \ge 1$ for all $\eps \ge 0$. 
		Now \eqref{eq:est_1} and \eqref{eq:est_2} together yield \eqref{eq:estimate_f_k}.
\end{proof}

\bibliographystyle{plain}
\bibliography{refs}
\end{document}